\documentclass[a4paper,11pt]{article}

\pdfoutput=1 
\usepackage[utf8]{inputenc}
\usepackage[british]{babel}
\usepackage{amsthm}
\usepackage{amsmath}
\usepackage{amssymb}
\usepackage{tikz}
\usepackage{booktabs}
\usepackage{array}
\usepackage[font={small},width=0.9\textwidth]{caption}
\usepackage{tabularray}

%%----- Theorem -------------------------------------
\newtheorem{theorem}{Theorem}[section]

\newtheorem{lemma}[theorem]{Lemma}

\theoremstyle{definition}
\newtheorem{example}[theorem]{Example}

\begin{document}

\begin{center}
{\Large\textbf{The Pattern Complexity of the Squiral Tiling}}

\vspace{2ex}
Johan Nilsson
\end{center}

\begin{abstract}
	We give an exact formula for the number of distinct square patterns of a given size that occur in the Squiral tiling. 
\end{abstract}

\noindent{\small MSC2010 classification: 
05A15 Exact enumeration problems,
05B45 Tessellation and tiling problems, 
52C20 Tilings in 2 dimensions.}

\section{Introduction}
The squiral tiling can be defined as a block substitution on the binary alphabet $\mathcal{A} = \{\mathtt{0,1}\}$ via
\begin{equation}
	\label{def:mu}
	\mu :  \quad 
	\begin{tikzpicture}[scale=0.45,baseline={([yshift=-.8ex]current bounding box.center)}]]
		\filldraw[draw=black,fill = yellow] (0,0) rectangle (1,1);
		\draw (0.5,0.5) node{\small\texttt{0}};
	\end{tikzpicture}
	\quad
	\mapsto
	\quad
	\begin{tikzpicture}[scale=0.45,baseline={([yshift=-.8ex]current bounding box.center)}]]
		\fill[yellow] (0,0) rectangle (3,3);
		\fill[red] (0,0) rectangle ++(1,1);
		\fill[red] (2,0) rectangle ++(1,1);
		\fill[red] (0,2) rectangle ++(1,1);
		\fill[red] (2,2) rectangle ++(1,1);
		\draw[cap = rect] (0,0) grid (3,3);
		\draw (0.5,0.5) node{\small\texttt{1}};
		\draw (1.5,0.5) node{\small\texttt{0}};
		\draw (2.5,0.5) node{\small\texttt{1}};
		\draw (0.5,1.5) node{\small\texttt{0}};
		\draw (1.5,1.5) node{\small\texttt{0}};
		\draw (2.5,1.5) node{\small\texttt{0}};
		\draw (0.5,2.5) node{\small\texttt{1}};
		\draw (1.5,2.5) node{\small\texttt{0}};
		\draw (2.5,2.5) node{\small\texttt{1}};
	\end{tikzpicture}
	\ ,
	\quad
	\quad
	\begin{tikzpicture}[scale=0.45,baseline={([yshift=-.8ex]current bounding box.center)}]]
		\filldraw[draw=black,fill = red] (0,0) rectangle (1,1);
		\draw (0.5,0.5) node{\small\texttt{1}};
	\end{tikzpicture}
	\quad
	\mapsto
	\quad
	\begin{tikzpicture}[scale=0.45,baseline={([yshift=-.8ex]current bounding box.center)}]]
		\fill[red] (0,0) rectangle (3,3);
		\fill[yellow] (0,0) rectangle ++(1,1);
		\fill[yellow] (2,0) rectangle ++(1,1);
		\fill[yellow] (0,2) rectangle ++(1,1);
		\fill[yellow] (2,2) rectangle ++(1,1);
		\draw[cap = rect] (0,0) grid (3,3);
		\draw (0.5,0.5) node{\small\texttt{0}};
		\draw (1.5,0.5) node{\small\texttt{1}};
		\draw (2.5,0.5) node{\small\texttt{0}};
		\draw (0.5,1.5) node{\small\texttt{1}};
		\draw (1.5,1.5) node{\small\texttt{1}};
		\draw (2.5,1.5) node{\small\texttt{1}};
		\draw (0.5,2.5) node{\small\texttt{0}};
		\draw (1.5,2.5) node{\small\texttt{1}};
		\draw (2.5,2.5) node{\small\texttt{0}};
	\end{tikzpicture}
	\ ,
\end{equation}
see \cite{BG13, FHG, GS87} and further references therein. Let us by $T$ denote the limit pattern obtain, when taking the letter $\mathtt{0}$ as starting seed and apply $\mu$ repeatedly. See Figure~\ref{fig:firstTn} for some of the first iteration of $\mu$ on the seed $\mathtt{0}$. We refer to $T$ as the squiral tiling. 

In this paper we focus on the pattern complexity of the squiral tiling, that is, we look at the number of distinct square patterns of a given size that occur anywhere in $T$. The main result of this paper is the following theorem.

\begin{theorem}
	\label{thm:main}
	Let $A_n$ be the number of unique patterns of size $n \times n$ that occur in the sqiural tiling. Then $A_1=2$, $A_2 = 14$, $A_3 = 70$, and 
	\begin{equation}
		\label{eq:main}
		A_n = \big(4+8\alpha -8\beta\big)(n-1)^2 + \left(12\cdot3^{\alpha}+24\cdot3^{\beta}\right)(n-1) -18 \cdot 9^{\alpha},
	\end{equation}
	for $n \geq 4$, where $\alpha = \lfloor \log_3 (n-2)\rfloor$  and $\beta = \lfloor \log_3 \frac{n-2}{2}\rfloor$.
\end{theorem}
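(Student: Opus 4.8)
The plan is to discard the geometric picture of $T$ in favour of an explicit arithmetic formula, and then to turn the enumeration of $n\times n$ windows into a counting problem for $\mathbb{F}_2$-bilinear forms. The first step exploits the special shape of $\mu$. Writing the $3\times3$ block in centred coordinates $(i,j)\in\{-1,0,1\}^2$ (so that the common central cell is the fixed seed around which the iterates $\mu^k(\mathtt 0)$ nest), the two images in \eqref{def:mu} are captured by the single rule $\mu(c)_{(i,j)} = c \oplus g(i)g(j)$, where $g(0)=0$, $g(\pm1)=1$, and $\oplus$ is addition in $\mathbb{F}_2$. Iterating this relation along the inflation and using the unique balanced-ternary expansion $x=\sum_k x_k 3^k$ with $x_k\in\{-1,0,1\}$ (valid for every $x\in\mathbb{Z}$), the two-sided fixed point unfolds to
\begin{equation*}
  T(x,y) \;=\; \sum_{k\ge 0} g(x_k)\,g(y_k) \pmod 2,
\end{equation*}
that is, $T(x,y)$ is the parity of the number of digit positions at which \emph{both} $x$ and $y$ carry a nonzero balanced-ternary digit. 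I would confirm this against the first iterates of $\mu$ and against $A_1=2$ directly.

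The engine of the argument is then the identity $T(x,y)=\langle G(x),G(y)\rangle$, where $G(x)=(g(x_k))_{k\ge0}$ is a finitely supported vector in $\mathbb{F}_2^{(\infty)}$ and $\langle\cdot,\cdot\rangle$ is the standard bilinear form. Stacking the rows $G(x),G(x+1),\dots,G(x+n-1)$ into an $\mathbb{F}_2$-matrix $\mathbf{G}_x$ (and likewise $\mathbf{G}_y$), the $n\times n$ window anchored at $(x,y)$ is exactly the Gram-type product $\mathbf{G}_x\mathbf{G}_y^{\top}$. Hence $A_n$ equals the number of distinct matrices $\mathbf{G}_x\mathbf{G}_y^{\top}$ as $(x,y)$ ranges over $\mathbb{Z}^2$, and since the two factors vary independently the problem splits into (a) describing the finite set $\mathcal{M}_n=\{\mathbf{G}_x\}$ of local digit-matrices, and (b) counting the image of the bilinear map $(\mathbf{G}_x,\mathbf{G}_y)\mapsto\mathbf{G}_x\mathbf{G}_y^{\top}$.

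For (a) I would analyse the balanced-ternary digits of $n$ consecutive integers: the low positions with $3^k\lesssim n$ run through all residues (with at most one carry), a single ``junction'' position records where the window crosses a power-of-three boundary, and every higher position is frozen across the window. This is precisely the mechanism producing the thresholds in the statement: $3^{\alpha}\le n-2<3^{\alpha+1}$ fixes the top cycling position, while comparing $n-2$ to $2\cdot 3^{\beta}$ detects whether the window is wide enough for a junction digit to attain both signs. Executing (b), I would organise the count by the alignment of the window against the level-$(\alpha+1)$ ternary grid: the generic interior alignments yield the quadratic contribution $(4+8\alpha-8\beta)(n-1)^2$, the alignments straddling a junction in one coordinate yield the linear terms $(12\cdot3^{\alpha}+24\cdot3^{\beta})(n-1)$, and the coincidences among the products account for the negative constant $-18\cdot9^{\alpha}$; the exceptional values $n=2,3$ should fall out of the same bookkeeping and be checked by hand.

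The hard part will be step (b): the map $(\mathbf{G}_x,\mathbf{G}_y)\mapsto\mathbf{G}_x\mathbf{G}_y^{\top}$ is \emph{not} injective, so distinct digit configurations can anchor identical windows, and the exact enumeration hinges on pinning down every such coincidence. I expect these to be governed by the $\mathbb{F}_2$-rank and kernel of the local digit-matrices: a window cannot resolve a change in a frozen high digit of $x$ unless some active column of $\mathbf{G}_y$ pairs nontrivially with it, and a careful rank computation should reduce the collisions to a single parity relation among the frozen digits. Making that relation explicit is what produces the clean constant $-18\cdot9^{\alpha}$ and the dichotomy between the $\alpha$- and $\beta$-regimes (note $\beta\in\{\alpha-1,\alpha\}$, so the quadratic coefficient is $4$ or $12$); the remaining work — summing the contributions and matching the closed form at the breakpoints $n=3^{\alpha}+2$ and $n=2\cdot3^{\alpha}+2$ — is routine.
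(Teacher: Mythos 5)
Your starting point is sound, and it is genuinely different from the paper's route: the digit formula $T(x,y)=\sum_{k}g(x_k)g(y_k)\bmod 2$ in balanced ternary is correct (it follows by induction from the centred block rule $\mu(c)_{(i,j)}=c\oplus g(i)g(j)$, and it checks against $T_1$ and $T_2$), and the reformulation of $A_n$ as the number of distinct Gram-type products $\mathbf{G}_x\mathbf{G}_y^{\top}$ is a valid, attractive reduction in the spirit of Allouche's one-dimensional analysis. The paper instead never leaves the supertile picture: it proves stabilization of pattern sets (Lemma~\ref{lemma:uniqueplateau}), establishes by finite enumeration plus lifting that the position classes $P_{i,j}$ are pairwise disjoint for sizes at least $4$ (Lemma~\ref{lemma:disjointPij}) and that boundary extensions preserve cardinalities (Lemma~\ref{lemma:extension}), derives exact recursions for $A_n,B_n,C_n$, and verifies the closed form against those recursions via the floor-log tables.

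The genuine gap is that your step (b) — the entire quantitative content of the theorem — is conjectured rather than proved. Phrases like ``I expect these to be governed by the $\mathbb{F}_2$-rank'' and ``should reduce the collisions to a single parity relation'' stand in for exactly the analysis that produces the coefficients $4+8\alpha-8\beta$, $12\cdot3^{\alpha}+24\cdot3^{\beta}$, and $-18\cdot9^{\alpha}$; nothing in the proposal establishes the classification of the active digit-matrices $\mathbf{G}_x$ as a function of the window's offset, nor the exact collision count for the map $(\mathbf{G}_x,\mathbf{G}_y)\mapsto\mathbf{G}_x\mathbf{G}_y^{\top}$. Two concrete obstacles deserve mention. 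First, even step (a) is delicate: carries in balanced ternary across $n$ consecutive integers (a digit overflowing to $-1$ with carry $+1$) must be tracked to justify the claimed ``one junction position, frozen high digits'' structure, and the frozen digits of $x$ and $y$ enter the window only through a single global complementation bit $\epsilon$, so the actual object to count is $\{MN^{\top}\oplus\epsilon J\}$ with $J$ the all-ones matrix — collisions between the $\epsilon=0$ and $\epsilon=1$ classes are an additional source of coincidences your sketch does not touch. Second, the paper's data show the collision structure is size-dependent: $A_2=14$ and $A_3=70$ do \emph{not} satisfy \eqref{eq:main}, and the disjointness of the $P_{i,j}$ fails below size $4$, so your bookkeeping must include a proof that the ``generic'' collision pattern sets in exactly at $n\geq4$ — the analogue of the paper's Lemma~\ref{lemma:disjointPij}, which the paper secures by explicit enumeration. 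Until the rank/collision analysis is carried out and that threshold is established, the proposal is a promising programme, not a proof.
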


Similar results to Theorem~\ref{thm:main} have been given by Allouche \cite{allouche}, Nilsson \cite {nilsson}, and by Galanov \cite{galanov}. The work by Galanov focuses on the pattern complexity in the Robinson tiling (see also~\cite{robinson}), while Allouche considers the number of distinct patterns occurring in the classical paperfolding sequences and their generalizations. Nilsson expands Allouche's results to the 2 dimensional case. Our work here follows a similar line of ideas as applied by Nilsson in \cite{nilsson}. 

The article is organized as follows; in the next section we introduce necessary notations, and give a few preliminary results. Thereafter, in section~\ref{sec:recursion}, we derive a system of recursions describing the size of sets of distinct patterns. The proof of our main result is then completed in section~\ref{sec:mainproof}.

\section{Preliminaries}

Recall the definition of $\mu$ on the alphabet $\mathcal{A} = \{\mathtt{0,1}\}$ from \eqref{def:mu}. An object of the form $\mu^n(x)$ where $x\in \mathcal{A}$ and $n \geq 0$ is called a \emph{supertile}. Here $\mu^n = \mu^{n-1}\circ \mu$ and $\mu^0 = Id$. Define the particular supertiles $T_n$ by $T_n := \mu^n(\mathtt{0})$, for $n\geq 0$. This definition can also be written as the block recursion 
\begin{equation}
\renewcommand{\arraystretch}{1.3}
\label{eq:DefTnBlock}
	T_{n+1} =  \begin{array}{ccc} \mu^n(\mathtt{1}) & T_n &\mu^n(\mathtt{1}) \\ T_n  & T_n  & T_n \\ \mu^n(\mathtt{1}) & T_n &\mu^n(\mathtt{1}) \end{array},
\end{equation}
for $n\geq 0$ and with $T_0 = \mathtt{0}$. See Figure~\ref{fig:firstTn} for a visualisation of the first $T_n$s. 

\begin{figure}[ht]
\centering\small
	\begin{tabular}{*{4}{c}}
		\begin{tikzpicture}[scale = 0.25, baseline={([yshift=-.8ex]current bounding box.center)}]
	\fill[yellow] (0,0) rectangle (1,1);
	\draw[cap = rect] (0,0) grid (1,1);
	\draw(0.5, -2) node{$T_0$};
\end{tikzpicture}  & 
		\begin{tikzpicture}[scale = 0.25, baseline={([yshift=-.8ex]current bounding box.center)}]
	\fill[yellow] (0,0) rectangle (3,3);
	\fill[red] (0,2) rectangle ++(1,1);
	\fill[red] (2,2) rectangle ++(1,1);
	\fill[red] (0,0) rectangle ++(1,1);
	\fill[red] (2,0) rectangle ++(1,1);
	\draw[cap = rect] (0,0) grid (3,3);
	\draw(1.5, -2) node{$T_1$};
\end{tikzpicture}  &
		\begin{tikzpicture}[scale = 0.25, baseline={([yshift=-.8ex]current bounding box.center)}]
	\fill[yellow] (0,0) rectangle (9,9);
	\fill[red] (1,8) rectangle ++(1,1);
	\fill[red] (3,8) rectangle ++(1,1);
	\fill[red] (5,8) rectangle ++(1,1);
	\fill[red] (7,8) rectangle ++(1,1);
	\fill[red] (0,7) rectangle ++(1,1);
	\fill[red] (1,7) rectangle ++(1,1);
	\fill[red] (2,7) rectangle ++(1,1);
	\fill[red] (6,7) rectangle ++(1,1);
	\fill[red] (7,7) rectangle ++(1,1);
	\fill[red] (8,7) rectangle ++(1,1);
	\fill[red] (1,6) rectangle ++(1,1);
	\fill[red] (3,6) rectangle ++(1,1);
	\fill[red] (5,6) rectangle ++(1,1);
	\fill[red] (7,6) rectangle ++(1,1);
	\fill[red] (0,5) rectangle ++(1,1);
	\fill[red] (2,5) rectangle ++(1,1);
	\fill[red] (3,5) rectangle ++(1,1);
	\fill[red] (5,5) rectangle ++(1,1);
	\fill[red] (6,5) rectangle ++(1,1);
	\fill[red] (8,5) rectangle ++(1,1);
	\fill[red] (0,3) rectangle ++(1,1);
	\fill[red] (2,3) rectangle ++(1,1);
	\fill[red] (3,3) rectangle ++(1,1);
	\fill[red] (5,3) rectangle ++(1,1);
	\fill[red] (6,3) rectangle ++(1,1);
	\fill[red] (8,3) rectangle ++(1,1);
	\fill[red] (1,2) rectangle ++(1,1);
	\fill[red] (3,2) rectangle ++(1,1);
	\fill[red] (5,2) rectangle ++(1,1);
	\fill[red] (7,2) rectangle ++(1,1);
	\fill[red] (0,1) rectangle ++(1,1);
	\fill[red] (1,1) rectangle ++(1,1);
	\fill[red] (2,1) rectangle ++(1,1);
	\fill[red] (6,1) rectangle ++(1,1);
	\fill[red] (7,1) rectangle ++(1,1);
	\fill[red] (8,1) rectangle ++(1,1);
	\fill[red] (1,0) rectangle ++(1,1);
	\fill[red] (3,0) rectangle ++(1,1);
	\fill[red] (5,0) rectangle ++(1,1);
	\fill[red] (7,0) rectangle ++(1,1);
	\draw[cap = rect] (0,0) grid (9,9);
	\draw(4.5, -2) node{$T_2$};
\end{tikzpicture}  &
		\input{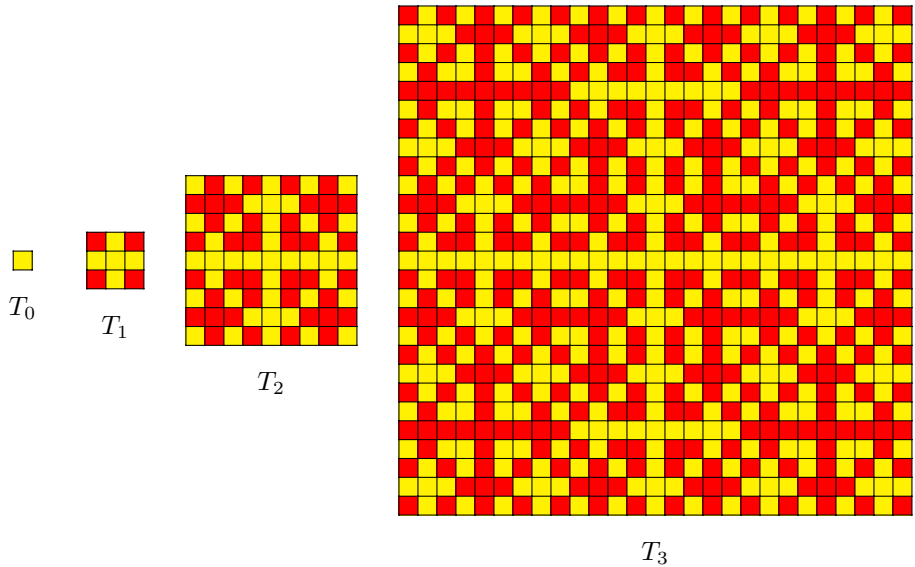}  
	\end{tabular}
	\caption{The first $T_n := \mu^{n}(\mathtt{0})$.}
	\label{fig:firstTn}
\end{figure}

By $T$ we shall mean the supertile of infinite order, obtained as the limit of the sequence $(T_n)_{n\geq0}$. We refer to $T$ as the \emph{squiral tiling}. 
Note that $T_n$ can be seen as a binary matrix, compare \eqref{def:mu}. According to the language used in the field of tilings, we say that submatrices of the $T_n$s are called \emph{patterns} or \emph{subpatterns}. (In the literature the term \emph{patch} (see \cite{FHG,BG13}) is also commonly used for this.) Clearly, any $T_n$ is also a pattern. For completeness, we also say that $T$ is a pattern (an infinite one).  We also adopt the notations used for matrices, with rows and columns. This means we can describe a finite pattern $S$ via its entries, that is, $S_{r,c} \in \mathcal{A}$ is the entry in $S$ at row $r$ and column $c$.
For a pattern $S$ (finite or infinite), we let $P(S, m\times n)$, where $m$ and $n$ are positive integers, be the set of all $m\times n$ patterns that occur somewhere in $S$. In the case of $S$ being a finite pattern, we use the notation $S[r, c, n \times k]$ to denote the $n \times k$ subpattern of $S$ that has its upper left corner at row $r$ and column $c$ in $S$. The notation $|\cdot|$ denotes the cardinality of a set. By the definition in \eqref{eq:DefTnBlock} we obtain the following result. 
\begin{lemma}
	\label{lemma:inclusion}
	Let $n\geq 0$. Then $T_n \in P(T_{n+1}, 3^n\times 3^n)$.\hfill$\square$ 
\end{lemma}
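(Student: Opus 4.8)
The plan is to read the result directly off the block recursion \eqref{eq:DefTnBlock}. The key observation is that every one of the nine blocks appearing there has exactly the same dimensions: since the substitution $\mu$ triples both the width and the height of any pattern, each supertile $\mu^n(x)$ with $x \in \mathcal{A}$ is a $3^n \times 3^n$ pattern, and in particular both $T_n = \mu^n(\mathtt{0})$ and $\mu^n(\mathtt{1})$ are $3^n \times 3^n$. Consequently $T_{n+1}$ is a $3^{n+1} \times 3^{n+1}$ pattern partitioned into a $3 \times 3$ array of $3^n \times 3^n$ blocks.

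First I would locate the central block of this array. Reading \eqref{eq:DefTnBlock}, the middle entry of the middle block-row is precisely $T_n$. In the matrix/subpattern notation introduced above, this says
\begin{equation*}
	T_{n+1}\big[\,3^n+1,\ 3^n+1,\ 3^n \times 3^n\,\big] = T_n,
\end{equation*}
that is, the $3^n \times 3^n$ subpattern of $T_{n+1}$ whose upper-left corner sits at row $3^n+1$ and column $3^n+1$ equals $T_n$. Since this exhibits $T_n$ as an occurrence of a $3^n \times 3^n$ pattern inside $T_{n+1}$, we conclude $T_n \in P(T_{n+1}, 3^n \times 3^n)$, as required.

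There is essentially no obstacle here: the only point that must be checked carefully is the index bookkeeping, namely that the central block genuinely begins at coordinate $(3^n+1,\, 3^n+1)$, which is immediate once the uniform block size $3^n$ is established. One could equally point to any of the four edge-midpoint blocks, each of which also equals $T_n$, but the central one is the cleanest choice.
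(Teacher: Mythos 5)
Your proof is correct and matches the paper exactly: the paper gives no written proof, stating the lemma as immediate from the block recursion \eqref{eq:DefTnBlock}, which is precisely the observation you spell out. Your explicit identification of the central block occurrence $T_{n+1}[3^n+1,\,3^n+1,\,3^n\times 3^n]=T_n$ (with correct index bookkeeping) simply makes the paper's implicit argument precise.
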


The Lemma~\ref{lemma:inclusion} shows that the chain of nested sets of subpatterns,
\[
   P(T_0,m \times m) \subseteq \cdots  \subseteq P(T_n,m \times m) \subseteq P(T_{n+1},m \times m) \subseteq \cdots, 
\]
is monotonic including in $n$, (if $m\leq3^n$). Next, we show that this chain is strictly monotonic including until all possible subpatterns are contained. 

\begin{lemma}
	\label{lemma:uniqueplateau}
	Let $m\geq1$. If there is an $n\geq0$ such that 
	\begin{equation}
		\label{eq:plateauassumption}
		P(T_{n}, m \times m) = P(T_{n+1}, m \times m),
	\end{equation}
	with $m\leq 3^n$, then 
	\begin{equation}
		\label{eq:plateauresult}
		P(T_{n}, m \times m) = P(T_{n+k}, m \times m)
	\end{equation}
	for all integers $k \geq 1$, and in particular $P(T_{n}, m\times m) = P(T, m\times m)$. 
\end{lemma}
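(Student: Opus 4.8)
The plan is to reduce the whole statement to a single inductive step and to prove that step by \emph{desubstituting} windows through $\mu$. Since $m\le 3^n\le 3^{n+1}\le\cdots$, it suffices to establish the implication: for every $j\ge 0$ with $m\le 3^j$,
\[
   P(T_j,m\times m)=P(T_{j+1},m\times m)\ \Longrightarrow\ P(T_{j+1},m\times m)=P(T_{j+2},m\times m).
\]
Chaining this from $j=n$ (the hypothesis it produces is exactly the hypothesis needed at the next level) yields $P(T_n,m\times m)=P(T_{n+k},m\times m)$ for all $k\ge 1$. The identity with $P(T,m\times m)$ then follows because any finite window of the limit $T$ already lies inside some $T_N$, so by Lemma~\ref{lemma:inclusion} one has $P(T,m\times m)=\bigcup_N P(T_N,m\times m)$, a union that is eventually constant once the chain stabilises.

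For the step itself, the inclusion $P(T_{j+1},m\times m)\subseteq P(T_{j+2},m\times m)$ is the monotonicity recorded right after Lemma~\ref{lemma:inclusion} (valid since $m\le 3^{j+1}$), so all the work is in the reverse inclusion. Here I exploit that $T_{j+2}=\mu(T_{j+1})$ and $T_{j+1}=\mu(T_j)$. Given an $m\times m$ window $W=T_{j+2}[r,c,m\times m]$, I overlay the coarse $3\times 3$ block grid induced by $\mu$: the window meets a $p\times q$ rectangle of these blocks, and because an integer interval of length $m$ meets at most $\lceil (m+2)/3\rceil$ consecutive blocks, both $p,q\le\lceil(m+2)/3\rceil\le m$ for $m\ge 2$ (the case $m=1$ being immediate, as only the patterns $\mathtt 0,\mathtt 1$ occur). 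Letting $W^-$ denote the corresponding $p\times q$ window of $T_{j+1}$, the window $W$ is precisely the crop $\mu(W^-)[\,r\bmod 3,\ c\bmod 3,\ m\times m\,]$ at the offset in $\{0,1,2\}^2$ fixed by $(r,c)$.

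Now the hypothesis enters. As $p,q\le m$, I extend $W^-$ to a full $m\times m$ window $U$ of $T_{j+1}$ (there is room, since $3^{j+1}>m$); by assumption $U\in P(T_{j+1},m\times m)=P(T_j,m\times m)$, so $U$, and hence its subwindow $W^-$, occurs somewhere in $T_j$. Applying $\mu$ blockwise carries this occurrence of $W^-$ in $T_j$ to an occurrence of $\mu(W^-)$ in $\mu(T_j)=T_{j+1}$, and cropping that occurrence at the same offset $(r\bmod 3,\,c\bmod 3)$ reproduces $W$ inside $T_{j+1}$. Hence $W\in P(T_{j+1},m\times m)$, which closes the step.

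The delicate part — the main obstacle — is the bookkeeping of this desubstitution rather than any conceptual difficulty: one must verify the size bound $\lceil(m+2)/3\rceil\le m$ that allows the hypothesis to be applied at the reduced scale, and must check that the extension of $W^-$ to $U$ and the final crop of $\mu(W^-)$ both stay within the boundaries of $T_j$ and $T_{j+1}$ respectively, so that $W$ is recovered \emph{exactly}. Notably, this route avoids any case analysis of the seam and corner configurations arising in the block recursion~\eqref{eq:DefTnBlock}, which is what makes it clean.
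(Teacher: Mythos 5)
Your proof is correct and takes essentially the same route as the paper's: an induction whose step desubstitutes an $m\times m$ window of $T_{j+2}$ through $\mu$ into a smaller window of $T_{j+1}$, uses the plateau hypothesis to push its occurrence down to $T_j$, and resubstitutes and crops to recover the window inside $T_{j+1}$. The only difference is presentational: you make explicit the size bound $\lceil (m+2)/3\rceil \le m$ and the extension/cropping bookkeeping that the paper leaves implicit in the phrase ``there is a pattern $b \in P(T_{n+p}, m\times m)$ such that $a$ is a subpattern of $\mu(b)$''.
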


\begin{proof}
We give a proof by induction on $k$ in \eqref{eq:plateauresult}. The basis case, $k = 1$, is direct from the assumption \eqref{eq:plateauassumption}. Assume for induction that \eqref{eq:plateauresult} holds for $1\leq k \leq p$. 
	
For the induction step, $k = p+1$, consider a pattern $a \in P(T_{n+p+1}, m \times m)$. Then there is a pattern $b \in P(T_{n+p}, m \times m)$ such that $a$ is a subpattern of $\mu(b)$. By the induction assumption we have that $b \in P(T_{n+p-1}, m \times m)$. This implies
\[  a \in P(\mu(b),m \times m) \subseteq P(T_{n+p}, m \times m).
\]
Therefore $P(T_{n+p}, m \times m) \supseteq P(T_{n+p+1}, m \times m)$, and by Lemma~\ref{lemma:inclusion} it follows that 
\[  P(T_{n+p}, m \times m) = P(T_{n+p+1}, m \times m),
\]
which completes the induction. 
\end{proof}

\begin{example}
\label{ex:patterncount}
By inspection, we find
\[
	P(T_2, 2\times 2) = P(T_3, 2\times 2),
\]
with $|P(T_2, 2\times 2)| = 14$. Lemma~\ref{lemma:uniqueplateau} now implies that $P(T_2, 2\times2) = P(T, 2\times2)$, so we can find all $2\times 2$ patterns in the squiral tiling $T$ by just looking at patterns in $T_2$. In the same way, continuing the enumeration and applying Lemma~\ref{lemma:uniqueplateau}, we find 
\[  
	P(T_3, 4\times4) = P(T_4, 4\times4) =  P(T, 4\times4),
\]
with $|P(T, 4\times4)| = 126$. As a consequence, we clearly also have $P(T_4, 3\times3) = P(T, 3\times3)$ without any further enumerations. This because $T_4$ contains all $4\times4$ patterns, and therefore it must also contain all $3\times3$ patterns. \hfill$\diamond$
\end{example}

The elements of the set $P(T,m \times n)$ can be split into sets depending on their position relative to the underlying structure of supertiles of size $3\times 3$. For $i,j\in\{1,2,3\}$ we define the sets  
\begin{equation}
\label{eq:defofPij}
	P_{i,j}(T, m \times n) 
	:= \left\{ \mu(x)[i,j,m \times n] : x \in P(T, m \times n) \right\}.
\end{equation}
The definition in \eqref{eq:defofPij} can be extend to all positive indices via
\[
	P_{i+3s,j+3t}(T, m \times n) := P_{i,j}(T, m \times n),
\]
where $s,t \in \mathbb{N}$. It is clear that 
\[
	P(T, m \times n) = \bigcup_{i,j\in\{1,2,3\}} P_{i,j}(T, m \times n),
\]
as any $x\in P(T, m \times n)$  must be in at least one $P_{i,j}(T, m \times n)$. Moreover, it is by construction clear that each of the sets $P_{i,j}(T, m \times n)$ are non-empty. 
 
\begin{example}
\label{ex:disjointPij}
In Example~\ref{ex:patterncount} we saw that all patterns of size $4\times4 $ are found in $T_3$. This leads to that we can find all the sets $P_{i,j}(T, 4 \times 4)$, for $i,j \in \{1,2,3\}$. An enumeration shows that 
\[	
	P(T, 4 \times 4) = \bigcup_{i,j\in\{1,2,3\}} P_{i,j}(T, 4 \times 4), 
\]
and that the sets on the right hand side are pairwise disjoint. Moreover, we find $|P_{i,j}(T, 4 \times 4)| = 14$, for all the indices involved. This gives $|P(T, 4\times4)| = 9 \cdot 14 = 126$, as also already seen in Example~\ref{ex:patterncount}.\hfill$\diamond$
\end{example}

\begin{lemma}
\label{lemma:disjointPij}
Let $n,m\geq4$. Then 
\begin{equation}
\label{eq:disjointPij}
	P(T, m \times n) = \bigcup_{i,j\in\{1,2,3\}} P_{i,j}(T, m \times n), 
\end{equation}
and the sets on the right hand side of \eqref{eq:disjointPij} are non-empty and pairwise disjoint.
\end{lemma}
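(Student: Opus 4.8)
The union in \eqref{eq:disjointPij} and the non-emptiness of each $P_{i,j}(T,m\times n)$ were already observed before the statement, so the whole content here is the pairwise disjointness. The plan is to prove a recognisability property: the \emph{phase} of a pattern relative to the level-one supertile grid is forced by the pattern itself once $m,n\ge 4$. Concretely, if $z=\mu(x)[i,j,m\times n]$ with $x\in P(T,m\times n)$, then $z$ sits inside the finite pattern $\mu(x)$, whose $3\times 3$ supertiles are aligned with its own upper-left corner; hence the upper-left cell of $z$ occupies position $(i,j)$ modulo $3$ inside a supertile. Thus $z\in P_{i,j}\cap P_{i',j'}$ would exhibit the \emph{same} pattern $z$ at two phases $(i,j)$ and $(i',j')$, and I would derive $(i,j)=(i',j')$, which is exactly disjointness.

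The key local rule comes from the explicit form of $\mu$. In $\mu(a)$ the five ``plus'' cells (the centre and the four edge-midpoints) equal $a$, while the four corner cells equal $\bar a$. From this one reads off a radius-one characterisation: a cell of $T$ is the centre of its supertile (phase $(2,2)$ in the $\{1,2,3\}$ indexing) \emph{if and only if} all four of its orthogonal neighbours carry the same letter as it. Indeed, the four neighbours of a centre are the four edge-midpoints of the same supertile, all equal to the parent letter; conversely every non-centre cell is, inside its own supertile, orthogonally adjacent to a plus/corner pair of opposite letters, hence always has a neighbour of the opposite letter. Consequently the centres are precisely the cells congruent to the centre phase modulo $3$, and this sublattice — so the whole supertile grid, and therefore the phase — is pinned down by any window containing one centre together with its full cross of neighbours.

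Now I would run the main argument. For a cell of $z$ lying in the interior (off the border of $z$) the entire cross lies inside $z$, so whether it is a centre is an intrinsic property of $z$; let $S$ be the set of interior centres, a subset determined by $z$ alone. If $z$ occurs at phase $(i,j)$ then $S$ must coincide with the interior cells carrying the centre residues under that phase, and likewise for $(i',j')$. When $m,n\ge 5$ the interior has at least three consecutive rows and columns, hence meets the centre sublattice, so $S\neq\varnothing$; reading the residues modulo $3$ of any point of $S$ recovers the phase uniquely and forces $(i,j)=(i',j')$.

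The genuine obstacle is the border cases where a dimension equals $4$, since then the window may be placed so that every centre falls on its boundary (e.g.\ a centre row landing on the top and bottom rows of the window), whereupon $S=\varnothing$ and the centre test is silent. I would close these with two supplementary observations of the same local flavour. First, when exactly one dimension equals $4$, say the height, the width is at least $5$, so interior centre columns still exist; an empty $S$ then forces the centre rows onto the top and bottom border, which already pins the row-phase, and the column-phase is read off from the non-centre rows, whose per-supertile triple is $\bar a\,a\,\bar a$, so that the central columns are exactly the cells differing from both horizontal neighbours — again intrinsic to $z$ — and the transposed statement handles the symmetric case. Second, the single fully degenerate case $m=n=4$ with both centre lines on the border I would settle by the finite verification already performed in Example~\ref{ex:disjointPij}, which is legitimate since Lemma~\ref{lemma:uniqueplateau} reduces $P(T,4\times4)$ to the explicit finite set $P(T_3,4\times4)$. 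The main difficulty throughout is precisely these degenerate placements of the smallest windows, where the clean radius-one criterion runs out and must be supplemented by the coarser row- and column-markers.
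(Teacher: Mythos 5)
Your overall architecture is sound and genuinely different from the paper's: the paper disposes of all cases at once by restricting a hypothetical common pattern to its upper-left $4\times4$ subpattern and invoking the finite enumeration of Example~\ref{ex:disjointPij}, whereas you prove recognisability of the level-one supertile grid directly. Your radius-one centre criterion is correct --- a centre has its full cross inside its own supertile, all four neighbours equal to the parent letter, while every non-centre cell has, inside its own supertile, an orthogonal neighbour of the opposite letter --- and since every cell of $\mu(x)$ lies in a complete supertile, the criterion is valid intrinsically in any window. This settles every case with $m,n\geq5$, and the case $m=n=4$ you legitimately delegate to the same enumeration the paper uses.

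However, your intermediate case (exactly one dimension equal to $4$) contains a false step. You claim that in a non-centre row the centre columns are \emph{exactly} the cells differing from both horizontal neighbours. This fails at supertile boundaries where the parent letters change: the letters $\mathtt{1},\mathtt{0},\mathtt{1}$ occur horizontally adjacent in $T$ (top row of $T_1$), and the top rows of $\mu(\mathtt{1})\,\mu(\mathtt{0})\,\mu(\mathtt{1})$ read $\mathtt{0\,1\,0\,1\,0\,1\,0\,1\,0}$, a strictly alternating word in which \emph{every} cell differs from both horizontal neighbours, not only the three centre-column cells. So your marker can select all columns and pins down nothing; in the height-$4$, row-phase-$2$ subcase the column phase is not recovered, and disjointness for these windows remains unproven. (A repair within your framework would have to use the two centre rows, which lie on the border of $z$ but still belong to it and are constant on each supertile triple; an alternating non-centre row forces letter changes that appear there as block boundaries. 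But as written the equivalence you assert is simply false.) The quickest fix is the paper's own move: if $z\in P_{i,j}(T,m\times n)\cap P_{i',j'}(T,m\times n)$ then $z[1,1,4\times4]\in P_{i,j}(T,4\times4)\cap P_{i',j'}(T,4\times4)$, and the enumeration of Example~\ref{ex:disjointPij} shows the nine $4\times4$ sets are pairwise disjoint --- one line, uniform in all $m,n\geq4$, at the price of resting entirely on the computation your structural argument was designed to avoid.
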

\begin{proof}
Assume for contradiction that there are $m,n\geq 4$ and two different pairs of pair of indices $i_1,j_1,i_2,j_2\in\{1,2,3\}$ such that there is a pattern 
\[	
	x \in P_{i_1,j_1}(T, m \times n) \bigcap P_{i_2,j_2}(T, m \times n).
\]
Then the pattern $x' = x[1,1,4\times4]$ must be in the intersection 
\[
	P_{i_1,j_1}(T, 4 \times 4) \bigcap P_{i_2,j_2}(T, 4 \times 4),
\]
but according to what we saw in Example~\ref{ex:disjointPij}, this intersection is empty.
\end{proof}

\begin{example}
\label{ex:extension}
A computer enumeration shows that 
\[
	|P_{3,3}(T, 5\times 5)| = |P_{1,1}(T, 9\times 9)|.
\]
See Figure~\ref{fig:extension} for the outlay of the patterns in the sets above in relation to each other. It follows that we may take $P_{3,3}(T, 5\times 5) $ and extend it's contained patterns with 2 extra rows and columns on either side, without changing the cardinality of the set. That is, for $i,j\in\{1,2,3\}$ we have 
\[
	|P_{3,3}(T, 5\times 5)| = |P_{i,j}(T, m\times n)|
\]
where $m \in\{8-i,9-i,10-i\}$, $n\in\{8-j,9-j,10-j\}$.\hfill$\diamond$
\end{example}

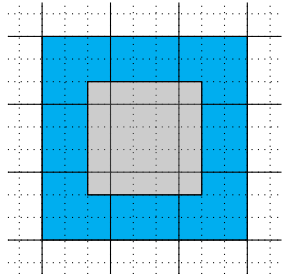
\begin{figure}[ht]
	\begin{center}
		\begin{tikzpicture}[scale = 0.3]
			\draw[color=black, fill=cyan, even odd rule ] (1, 1) rectangle ++ (9, 9) (3, 3) rectangle ++ (5, 5);			
			\draw[step=1, black, dotted]    (-0.5,-0.5) grid (11.5,11.5);
			\foreach \a in {1,4,7,10}
			{	\draw (-0.5,\a) -- (11.5,\a);
				\draw (\a, -0.5) -- (\a, 11.5);
			}
			\draw[color=black, fill=gray, fill opacity=0.4] (3, 3) rectangle ++ (5, 5);
			
		\end{tikzpicture}
		\caption{The extension of patterns, as discussed in Example~\ref{ex:extension}. The gray region represent an element in $P_{3,3}(T,5\times5)$, and the blue region one in $P_{1,1}(T,9\times9)$. An enumeration shows that the two sets have the same cardinality. The solid grid indicates the structure of supertiles of size $3\times3$.}
     \label{fig:extension}
	\end{center}
\end{figure}

The extension of patterns observed in Example~\ref{ex:extension} can be extend to more general cases, as stated in the following lemma.

\begin{lemma}
\label{lemma:extension}
Let $i,j\in\{1,2,3\}$. Then 
\[
	|P_{3,3}(T, (5+3s)\times (5+3t)| = |P_{i,j}(T, (m+3s)\times (n+3t))|
\]
where $m \in\{8-i,9-i,10-i\}$, $n\in\{8-j,9-j,10-j\}$, and $s, t\in \mathbf{N}$. \hfill$\square$
\end{lemma}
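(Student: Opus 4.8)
The plan is to reduce both cardinalities to the single quantity $|P(T,(3+s)\times(3+t))|$, by exhibiting each side as the image of the set of $(3+s)\times(3+t)$ patterns under an injective ``windowed substitution'' map. First I would observe that the window defining $P_{3,3}(T,(5+3s)\times(5+3t))$, namely $\mu(x)[3,3,(5+3s)\times(5+3t)]$, occupies rows $3$ through $7+3s$ and columns $3$ through $7+3t$ of $\mu(x)$, and hence depends only on the sub-block $z:=x[1,1,(3+s)\times(3+t)]$. So this set equals $\{\mu(z)[3,3,(5+3s)\times(5+3t)] : z\in P(T,(3+s)\times(3+t))\}$, using that every $(3+s)\times(3+t)$ pattern of $T$ extends to the lower right to a pattern of the larger size. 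The identical computation shows that $P_{i,j}(T,(m+3s)\times(n+3t))$ depends only on the same sub-block $z$ when $m\in\{8-i,9-i,10-i\}$ and $n\in\{8-j,9-j,10-j\}$, since then the window runs from row $i\le 3$ to row $i+m-1+3s\in[7+3s,9+3s]$, and analogously in the columns, in both cases spanning exactly block-rows $1$ through $3+s$ and block-columns $1$ through $3+t$.

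The core of the argument is a recognizability statement: the central window $W(z):=\mu(z)[3,3,(5+3s)\times(5+3t)]$ determines $z$ uniquely. I would prove this block by block. In the $(3+s)\times(3+t)$ array of images $\mu(z_{R,C})$, the window sees each interior block in full, each boundary non-corner block along one complete row or column, and each of the four corner blocks in a single cell, namely the corner of that block nearest the centre. It then suffices to check, from the explicit form of $\mu$ in \eqref{def:mu}, that (i) a full $3\times3$ block distinguishes $\mu(\mathtt 0)$ from $\mu(\mathtt 1)$; (ii) any single row or column does, since the distinct rows and columns of $\mu(\mathtt 0)$ are $\{101,000\}$ while those of $\mu(\mathtt 1)$ are $\{010,111\}$, which are disjoint; and (iii) any single corner cell does, since the corners of $\mu(\mathtt 0)$ all carry $\mathtt 1$ and those of $\mu(\mathtt 1)$ all carry $\mathtt 0$. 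Hence every letter $z_{R,C}$ is recoverable from $W(z)$, so $z\mapsto W(z)$ is injective.

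I would then transfer this injectivity to the offset windows. Since for the admissible ranges of $(m,n)$ the window defining $P_{i,j}(T,(m+3s)\times(n+3t))$ starts no later than row $3$ and column $3$ and ends no earlier than row $7+3s$ and column $7+3t$, it contains $W(z)$ as a sub-pattern; consequently $z\mapsto \mu(z)[i,j,(m+3s)\times(n+3t)]$ is injective as well. Both index maps therefore biject $P(T,(3+s)\times(3+t))$ onto their respective sets, giving
\[
	|P_{3,3}(T,(5+3s)\times(5+3t))| = |P(T,(3+s)\times(3+t))| = |P_{i,j}(T,(m+3s)\times(n+3t))|,
\]
which is the claim; the case $s=t=0$ recovers Example~\ref{ex:extension}.

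The main obstacle, and where I would spend the most care, is the recognizability step: one must verify that for every offset and every $s,t$ the visible portion of each boundary and corner block really is a full line or a genuine block corner, so that cases (ii) and (iii) apply. The saving observation is that this picture is uniform in $s,t$, since enlarging $s$ or $t$ only inserts further fully-seen interior blocks and never alters what is seen of the extreme block-rows and block-columns; thus the finite check at $s=t=0$ already settles the general case. A secondary point to justify is the extension property of $T$ invoked above, ensuring that restriction of the larger patterns to their top-left $(3+s)\times(3+t)$ corner realises all of $P(T,(3+s)\times(3+t))$.
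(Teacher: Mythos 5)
Your proof is correct, but it is a genuinely different (and stronger) route than the paper's. The paper offers no proof of this lemma at all: it verifies the case $s=t=0$ by computer enumeration in Example~\ref{ex:extension} and then simply asserts the general statement. You instead reduce both sides to the single quantity $|P(T,(3+s)\times(3+t))|$ via a recognizability argument: the window spans exactly block-rows $1$ through $3+s$ and block-columns $1$ through $3+t$ of $\mu(x)$ for every admissible $(i,j,m,n)$, so the window is a function of $z=x[1,1,(3+s)\times(3+t)]$, and the visible portion of each block of $\mu(z)$ determines the underlying letter, making $z\mapsto\mu(z)[i,j,\cdot]$ injective on $P(T,(3+s)\times(3+t))$. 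Your case analysis (i)--(iii) is sound but can be compressed: from \eqref{def:mu}, $\mu(\mathtt{1})$ is the cellwise complement of $\mu(\mathtt{0})$, so \emph{any} nonempty visible sub-rectangle of a block already distinguishes the two letters, which disposes of the corner- and edge-block cases at once and makes the uniformity in $s,t$ immediate. Your approach buys two things the paper's treatment does not: it eliminates the reliance on an unexplained computer check, and it identifies the common cardinality as $|P(T,(3+s)\times(3+t))|$ — precisely the identification (e.g.\ $|P_{1,1}(T,(3n)\times(3n))|=A_n$) that the recursions \eqref{eq:recA3n2}--\eqref{eq:recC3n} use implicitly but the lemma as stated does not itself supply. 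The two side conditions you flag (extension of patterns of $T$ to the lower right, and the block-span bookkeeping $i+m-1+3s\in[7+3s,\,9+3s]$) are both correct and adequately justified.
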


\section{Recursion}
\label{sec:recursion}

In this section we turn to the question of deriving a system of recursions describing the size of the set $P(T, n \times  n )$. Let us start by introducing the following notations, 
\begin{align*}
	A_n &:= |P(T, n \times  n )|,   \\
	B_n &:= |P(T, n \times (n+1))|, \\
	C_n &:= |P(T,(n+1)\times n )|. 
\end{align*}
Note here that the quantity $A_n$ is the one used in the formulation of Theorem~\ref{thm:main}. 

The next step is now to derive recursion relations for $A_n, B_n$, and $C_n$. Below we go through the different cases involved in the recursions. If $n\geq2$ we can apply Lemma~\ref{lemma:disjointPij} and Lemma~\ref{lemma:extension} to obtain
\begin{equation}
\label{eq:recA3n2}
\renewcommand{\arraystretch}{1.5}
\text{\small
	$\displaystyle{A_{3n-2} = \sum_{i,j\in\{1,2,3\}} |P_{i,j}(T,(3n-2)\times (3n-2))| = 9 \cdot  A_n.}$
}
\end{equation}
See Figure~\ref{fig:recA3n2} for a visualization of these extension of the $P$-sets.

\begin{figure}[ht]
	\begin{center}
		\begin{tikzpicture}[scale = 0.25]
			\begin{scope}[xshift = 0cm, yshift = 28cm]			
				\draw[color=black, fill=gray, fill opacity=0.3] (0, 5) rectangle ++ (4, 4);
				\draw[color=black, fill=cyan, even odd rule] (0, 5) rectangle ++ (4, 4) (0, 3) rectangle ++ (6, 6);
				\draw[step=1, black, dotted]    (-0.5,-0.5) grid (9.5, 9.5);
				\draw[step=3, black, cap=rect]  (-0.5,-0.5) grid (9.5, 9.5);
				\draw( 4.5, -2) node{\footnotesize $(i,j) = (1,1)$};
			\end{scope}
			\begin{scope}[xshift = 14 cm, yshift = 28cm]
				\draw[color=black, fill=gray, fill opacity=0.3] (1, 5) rectangle ++ (4, 4);
				\draw[color=black, fill=cyan, even odd rule] (1, 5) rectangle ++ (4, 4) (0, 3) rectangle ++ (6, 6);
				\draw[step=1, black, dotted]    (-0.5,-0.5) grid (9.5, 9.5);
				\draw[step=3, black, cap=rect]  (-0.5,-0.5) grid (9.5, 9.5);
				\draw( 4.5, -2) node{\footnotesize $(i,j) = (1,2)$};
			\end{scope}
			\begin{scope}[xshift = 28 cm, yshift = 28cm]
				\draw[color=black, fill=gray, fill opacity=0.3] (2, 5) rectangle ++ (4, 4);
				\draw[color=black, fill=cyan, even odd rule] (2, 5) rectangle ++ (4, 4)(0, 3) rectangle ++ (6, 6);
				\draw[step=1, black, dotted]    (-0.5,-0.5) grid (9.5, 9.5);
				\draw[step=3, black, cap=rect]  (-0.5,-0.5) grid (9.5, 9.5);
				\draw( 4.5, -2) node{\footnotesize $(i,j) = (1,3)$};
			\end{scope}
			\begin{scope}[xshift = 0cm, yshift = 14cm]			
				\draw[color=black, fill=gray, fill opacity=0.3] (0, 4) rectangle ++ (4, 4);
				\draw[color=black, fill=cyan, even odd rule] (0, 4) rectangle ++ (4, 4) (0, 3) rectangle ++ (6, 6);
				\draw[step=1, black, dotted]    (-0.5,-0.5) grid (9.5, 9.5);
				\draw[step=3, black, cap=rect]  (-0.5,-0.5) grid (9.5, 9.5);
				\draw( 4.5, -2) node{\footnotesize $(i,j) = (2,1)$};
			\end{scope}
			\begin{scope}[xshift = 14 cm, yshift = 14cm]
				\draw[color=black, fill=gray, fill opacity=0.3] (1, 4) rectangle ++ (4, 4);
				\draw[color=black, fill=cyan, even odd rule] (1, 4) rectangle ++ (4, 4) (0, 3) rectangle ++ (6, 6);
				\draw[step=1, black, dotted]    (-0.5,-0.5) grid (9.5, 9.5);
				\draw[step=3, black, cap=rect]  (-0.5,-0.5) grid (9.5, 9.5);
				\draw( 4.5, -2) node{\footnotesize $(i,j) = (2,2)$};
			\end{scope}
			\begin{scope}[xshift = 28 cm, yshift = 14cm]
				\draw[color=black, fill=gray, fill opacity=0.3] (2, 4) rectangle ++ (4, 4);
				\draw[color=black, fill=cyan, even odd rule] (2, 4) rectangle ++ (4, 4) (0, 3) rectangle ++ (6, 6);
				\draw[step=1, black, dotted]    (-0.5,-0.5) grid (9.5, 9.5);
				\draw[step=3, black, cap=rect]  (-0.5,-0.5) grid (9.5, 9.5);
				\draw( 4.5, -2) node{\footnotesize $(i,j) = (2,3)$};
			\end{scope}
			\begin{scope}[xshift = 0cm, yshift = 0cm]			
				\draw[color=black, fill=gray, fill opacity=0.3] (0, 3) rectangle ++ (4, 4);
				\draw[color=black, fill=cyan, even odd rule] (0, 3) rectangle ++ (4, 4) (0, 3) rectangle ++ (6, 6);
				\draw[step=1, black, dotted]    (-0.5,-0.5) grid (9.5, 9.5);
				\draw[step=3, black, cap=rect]  (-0.5,-0.5) grid (9.5, 9.5);
				\draw( 4.5, -2) node{\footnotesize $(i,j) = (3,1)$};
			\end{scope}
			\begin{scope}[xshift = 14 cm, yshift = 0cm]
				\draw[color=black, fill=gray, fill opacity=0.3] (1, 3) rectangle ++ (4, 4);
				\draw[color=black, fill=cyan, even odd rule] (1, 3) rectangle ++ (4, 4) (0, 3) rectangle ++ (6, 6);
				\draw[step=1, black, dotted]    (-0.5,-0.5) grid (9.5, 9.5);
				\draw[step=3, black, cap=rect]  (-0.5,-0.5) grid (9.5, 9.5);
				\draw( 4.5, -2) node{\footnotesize $(i,j) = (3,2)$};
			\end{scope}
			\begin{scope}[xshift = 28 cm, yshift = 0cm]
				\draw[color=black, fill=gray, fill opacity=0.3] (2, 3) rectangle ++ (4, 4);
				\draw[color=black, fill=cyan, even odd rule] (2, 3) rectangle ++ (4, 4) (0, 3) rectangle ++ (6, 6);
				\draw[step=1, black, dotted]    (-0.5,-0.5) grid (9.5, 9.5);
				\draw[step=3, black, cap=rect]  (-0.5,-0.5) grid (9.5, 9.5);
				\draw( 4.5, -2) node{\footnotesize $(i,j) = (3,3)$};
			\end{scope}
		\end{tikzpicture}
		\caption{Extensions used in \eqref{eq:recA3n2} for the recursion of $A_{3n-2}$. The gray region represent an element in $P_{i,j}(T,(3n-2)\times(3n-2))$, and the blue one the region it is extend with.}
     \label{fig:recA3n2}
	\end{center}
\end{figure}
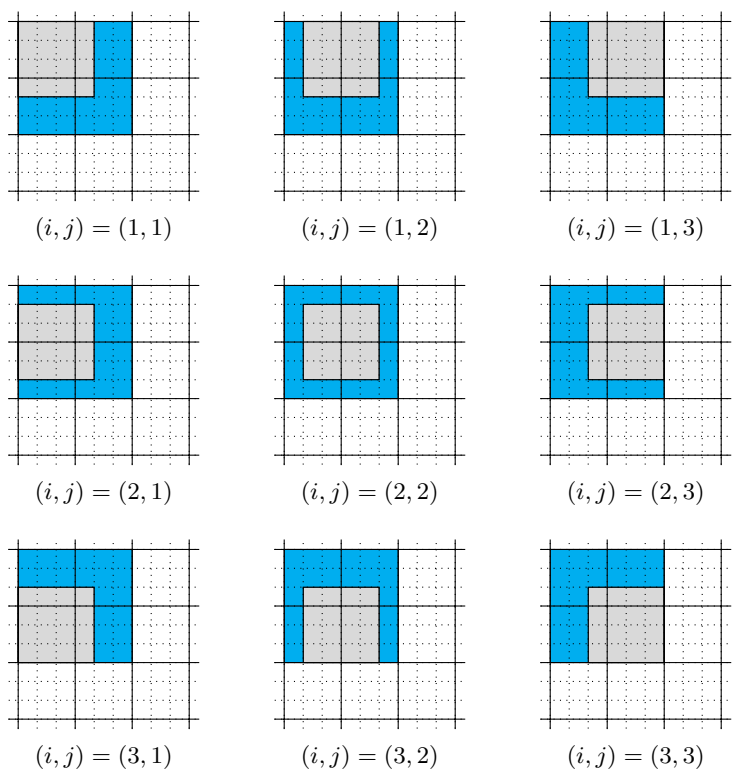

\begin{equation}
\label{eq:recA3n1}
\renewcommand{\arraystretch}{1.5}
\text{\small$\begin{array}{c@{\ }c@{\ }c@{\ }l}
	A_{3n-1} 
	&=& \multicolumn{2}{l}{ \displaystyle{\sum_{i=1}^3 \sum_{j=1}^3} |P_{i,j}(T,(3n-1)\times (3n-1))| }\\
	&=& & |P_{1,1}(T,(3n)\times (3n))| + |P_{1,1}(T,(3n)\times (3n))| \\ 
	& &+& |P_{1,1}(T,(3n)\times (3n+3))| + |P_{1,1}(T,(3n)\times (3n))| \\ 
	& &+& |P_{1,1}(T,(3n)\times (3n))| + |P_{1,1}(T,(3n)\times (3n+3))| \\
	& &+& |P_{1,1}(T,(3n+3)\times (3n))| + |P_{1,1}(T,(3n+3)\times (3n))| \\
	& &+& |P_{1,1}(T,(3n+3)\times (3n+3))| \\
	&=& & A_{n} + A_{n} + B_{n}  \\
	& &+& A_{n} + A_{n} + B_{n}  \\
	& &+& C_{n} + C_{n} + A_{n+1},
\end{array}$}
\end{equation}
see also Figure~\ref{fig:recA3n1}.

\begin{figure}[ht]
	\begin{center}
		\begin{tikzpicture}[scale = 0.25]
			\begin{scope}[xshift = 0cm, yshift = 28cm]			
				\draw[color=black, fill=gray, fill opacity=0.3] (0, 4) rectangle ++ (5, 5);
				\draw[color=black, fill=cyan, even odd rule] (0, 4) rectangle ++ (5, 5) (0, 3) rectangle ++ (6, 6);
				\draw[step=1, black, dotted]    (-0.5,-0.5) grid (9.5, 9.5);
				\draw[step=3, black, cap=rect]  (-0.5,-0.5) grid (9.5, 9.5);
				\draw(4.5, -2) node{\footnotesize $(i,j) = (1,1)$};
			\end{scope}
			\begin{scope}[xshift = 14cm, yshift = 28cm]
				\draw[color=black, fill=gray, fill opacity=0.3] (1, 4) rectangle ++ (5, 5);
				\draw[color=black, fill=cyan, even odd rule] (1, 4) rectangle ++ (5, 5) (0, 3) rectangle ++ (6, 6);
				\draw[step=1, black, dotted]    (-0.5,-0.5) grid (9.5, 9.5);
				\draw[step=3, black, cap=rect]  (-0.5,-0.5) grid (9.5, 9.5);
				\draw(4.5, -2) node{\footnotesize $(i,j) = (1,2)$};
			\end{scope}
			\begin{scope}[xshift = 28cm, yshift = 28cm]
				\draw[color=black, fill=gray, fill opacity=0.3] (2, 4) rectangle ++ (5, 5);
				\draw[color=black, fill=cyan, even odd rule] (2, 4) rectangle ++ (5, 5) (0, 3) rectangle ++ (9, 6);
				\draw[step=1, black, dotted]    (-0.5,-0.5) grid (9.5, 9.5);
				\draw[step=3, black, cap=rect]  (-0.5,-0.5) grid (9.5, 9.5);
				\draw(4.5, -2) node{\footnotesize $(i,j) = (1,3)$};
			\end{scope}
			\begin{scope}[xshift = 0cm, yshift = 14cm]			
				\draw[color=black, fill=gray, fill opacity=0.3] (0, 3) rectangle ++ (5, 5);
				\draw[color=black, fill=cyan, even odd rule] (0, 3) rectangle ++ (5, 5) (0, 3) rectangle ++ (6, 6);
				\draw[step=1, black, dotted]    (-0.5,-0.5) grid (9.5, 9.5);
				\draw[step=3, black, cap=rect]  (-0.5,-0.5) grid (9.5, 9.5);
				\draw(4.5, -2) node{\footnotesize $(i,j) = (2,1)$};
			\end{scope}
			\begin{scope}[xshift = 14cm, yshift = 14cm]
				\draw[color=black, fill=gray, fill opacity=0.3] (1, 3) rectangle ++ (5, 5);
				\draw[color=black, fill=cyan, even odd rule] (1, 3) rectangle ++ (5, 5) (0, 3) rectangle ++ (6, 6);
				\draw[step=1, black, dotted]    (-0.5,-0.5) grid (9.5, 9.5);
				\draw[step=3, black, cap=rect]  (-0.5,-0.5) grid (9.5, 9.5);
				\draw(4.5, -2) node{\footnotesize $(i,j) = (2,2)$};
			\end{scope}
			\begin{scope}[xshift = 28cm, yshift = 14cm]
				\draw[color=black, fill=gray, fill opacity=0.3] (2, 3) rectangle ++ (5, 5);
				\draw[color=black, fill=cyan, even odd rule] (2, 3) rectangle ++ (5, 5) (0, 3) rectangle ++ (9, 6);
				\draw[step=1, black, dotted]    (-0.5,-0.5) grid (9.5, 9.5);
				\draw[step=3, black, cap=rect]  (-0.5,-0.5) grid (9.5, 9.5);
				\draw(4.5, -2) node{\footnotesize $(i,j) = (2,3)$};
			\end{scope}
			\begin{scope}[xshift = 0cm, yshift = 0cm]			
				\draw[color=black, fill=gray, fill opacity=0.3] (0, 2) rectangle ++ (5, 5);
				\draw[color=black, fill=cyan, even odd rule] (0, 2) rectangle ++ (5, 5) (0, 0) rectangle ++ (6, 9);
				\draw[step=1, black, dotted]    (-0.5,-0.5) grid (9.5, 9.5);
				\draw[step=3, black, cap=rect]  (-0.5,-0.5) grid (9.5, 9.5);
				\draw(4.5, -2) node{\footnotesize $(i,j) = (3,1)$};
			\end{scope}
			\begin{scope}[xshift = 14cm, yshift = 0cm]
				\draw[color=black, fill=gray, fill opacity=0.3] (1, 2) rectangle ++ (5, 5);
				\draw[color=black, fill=cyan, even odd rule] (1, 2) rectangle ++ (5, 5) (0, 0) rectangle ++ (6, 9);
				\draw[step=1, black, dotted]    (-0.5,-0.5) grid (9.5, 9.5);
				\draw[step=3, black, cap=rect]  (-0.5,-0.5) grid (9.5, 9.5);
				\draw(4.5, -2) node{\footnotesize $(i,j) = (3,2)$};
			\end{scope}
			\begin{scope}[xshift = 28cm, yshift = 0cm]
				\draw[color=black, fill=gray, fill opacity=0.3] (2, 2) rectangle ++ (5, 5);
				\draw[color=black, fill=cyan, even odd rule] (2, 2) rectangle ++ (5, 5) (0, 0) rectangle ++ (9, 9);
				\draw[step=1, black, dotted]    (-0.5,-0.5) grid (9.5, 9.5);
				\draw[step=3, black, cap=rect]  (-0.5,-0.5) grid (9.5, 9.5);
				\draw(4.5, -2) node{\footnotesize $(i,j) = (3,3)$};
			\end{scope}
		\end{tikzpicture}
		\caption{Extensions used in \eqref{eq:recA3n1} for the recursion of $A_{3n-1}$. The gray region represent an element in $P_{i,j}(T,(3n-1)\times(3n-1))$, and the blue one the region it is extend with.}
     \label{fig:recA3n1}
	\end{center}
\end{figure}

\begin{equation}
\label{eq:recA3n}
\renewcommand{\arraystretch}{1.5}
\text{\small$\begin{array}{c@{\ }c@{\ }c@{\ }l}
	A_{3n} 
	&=& \multicolumn{2}{l}{ \displaystyle{\sum_{i=1}^3 \sum_{j=1}^3} |P_{i,j}(T,(3n)\times (3n))| }\\
	&=& & |P_{1,1}(T,(3n)\times (3n))| + |P_{1,1}(T,(3n)\times (3n+3))| \\ 
	& &+& |P_{1,1}(T,(3n)\times (3n+3))| + |P_{1,1}(T,(3n)\times (3n+3))| \\ 
	& &+& |P_{1,1}(T,(3n+3)\times (3n+3))| + |P_{1,1}(T,(3n)\times (3n+3))| \\
	& &+& |P_{1,1}(T,(3n+3)\times (3n))| + |P_{1,1}(T,(3n+3)\times (3n+3))| \\
	& &+& |P_{1,1}(T,(3n+3)\times (3n+3))| \\
	&=& & A_{n} + B_{n} + B_{n}  \\
	& &+& C_{n} + A_{n+1} + A_{n+1}  \\
	& &+& C_{n} + A_{n+1} + A_{n+1}, 
\end{array}$}
\end{equation}
see also Figure~\ref{fig:recA3n}.

\begin{figure}[ht]
	\begin{center}
		\begin{tikzpicture}[scale = 0.25]
			\begin{scope}[xshift = 0cm, yshift = 28cm]			
				\draw[color=black, fill=gray, fill opacity=0.3] (0, 3) rectangle ++ (6, 6);
				\draw[color=black, fill=cyan, even odd rule] (0, 3) rectangle ++ (6, 6) (0, 3) rectangle ++ (6, 6);
				\draw[step=1, black, dotted]    (-0.5,-0.5) grid (9.5, 9.5);
				\draw[step=3, black, cap=rect]  (-0.5,-0.5) grid (9.5, 9.5);
				\draw(4.5, -2) node{\footnotesize $(i,j) = (1,1)$};
			\end{scope}
			\begin{scope}[xshift = 14cm, yshift = 28cm]
				\draw[color=black, fill=gray, fill opacity=0.3] (1, 3) rectangle ++ (6, 6);
				\draw[color=black, fill=cyan, even odd rule] (1, 3) rectangle ++ (6, 6) (0, 3) rectangle ++ (9, 6);
				\draw[step=1, black, dotted]    (-0.5,-0.5) grid (9.5, 9.5);
				\draw[step=3, black, cap=rect]  (-0.5,-0.5) grid (9.5, 9.5);
				\draw(4.5, -2) node{\footnotesize $(i,j) = (1,2)$};
			\end{scope}
			\begin{scope}[xshift = 28cm, yshift = 28cm]
				\draw[color=black, fill=gray, fill opacity=0.3] (2, 3) rectangle ++ (6, 6);
				\draw[color=black, fill=cyan, even odd rule] (2, 3) rectangle ++ (6, 6) (0, 3) rectangle ++ (9, 6);
				\draw[step=1, black, dotted]    (-0.5,-0.5) grid (9.5, 9.5);
				\draw[step=3, black, cap=rect]  (-0.5,-0.5) grid (9.5, 9.5);
				\draw(4.5, -2) node{\footnotesize $(i,j) = (1,3)$};
			\end{scope}
			\begin{scope}[xshift = 0cm, yshift = 14cm]			
				\draw[color=black, fill=gray, fill opacity=0.3] (0, 2) rectangle ++ (6, 6);
				\draw[color=black, fill=cyan, even odd rule] (0, 2) rectangle ++ (6, 6) (0, 0) rectangle ++ (6, 9);
				\draw[step=1, black, dotted]    (-0.5,-0.5) grid (9.5, 9.5);
				\draw[step=3, black, cap=rect]  (-0.5,-0.5) grid (9.5, 9.5);
				\draw(4.5, -2) node{\footnotesize $(i,j) = (2,1)$};
			\end{scope}
			\begin{scope}[xshift = 14cm, yshift = 14cm]
				\draw[color=black, fill=gray, fill opacity=0.3] (1, 2) rectangle ++ (6, 6);
				\draw[color=black, fill=cyan, even odd rule] (1, 2) rectangle ++ (6, 6) (0, 0) rectangle ++ (9, 9);
				\draw[step=1, black, dotted]    (-0.5,-0.5) grid (9.5, 9.5);
				\draw[step=3, black, cap=rect]  (-0.5,-0.5) grid (9.5, 9.5);
				\draw(4.5, -2) node{\footnotesize $(i,j) = (2,2)$};
			\end{scope}
			\begin{scope}[xshift = 28cm, yshift = 14cm]
				\draw[color=black, fill=gray, fill opacity=0.3] (2, 2) rectangle ++ (6, 6);
				\draw[color=black, fill=cyan, even odd rule] (2, 2) rectangle ++ (6, 6) (0, 0) rectangle ++ (9, 9);
				\draw[step=1, black, dotted]    (-0.5,-0.5) grid (9.5, 9.5);
				\draw[step=3, black, cap=rect]  (-0.5,-0.5) grid (9.5, 9.5);
				\draw(4.5, -2) node{\footnotesize $(i,j) = (2,3)$};
			\end{scope}
			\begin{scope}[xshift = 0cm, yshift = 0cm]			
				\draw[color=black, fill=gray, fill opacity=0.3] (0, 1) rectangle ++ (6, 6);
				\draw[color=black, fill=cyan, even odd rule] (0, 1) rectangle ++ (6, 6) (0, 0) rectangle ++ (6, 9);
				\draw[step=1, black, dotted]    (-0.5,-0.5) grid (9.5, 9.5);
				\draw[step=3, black, cap=rect]  (-0.5,-0.5) grid (9.5, 9.5);
				\draw(4.5, -2) node{\footnotesize $(i,j) = (3,1)$};
			\end{scope}
			\begin{scope}[xshift = 14cm, yshift = 0cm]
				\draw[color=black, fill=gray, fill opacity=0.3] (1, 1) rectangle ++ (6, 6);
				\draw[color=black, fill=cyan, even odd rule] (1, 1) rectangle ++ (6, 6) (0, 0) rectangle ++ (9, 9);
				\draw[step=1, black, dotted]    (-0.5,-0.5) grid (9.5, 9.5);
				\draw[step=3, black, cap=rect]  (-0.5,-0.5) grid (9.5, 9.5);
				\draw(4.5, -2) node{\footnotesize $(i,j) = (3,2)$};
			\end{scope}
			\begin{scope}[xshift = 28cm, yshift = 0cm]
				\draw[color=black, fill=gray, fill opacity=0.3] (2, 1) rectangle ++ (6, 6);
				\draw[color=black, fill=cyan, even odd rule] (2, 1) rectangle ++ (6, 6) (0, 0) rectangle ++ (9, 9);
				\draw[step=1, black, dotted]    (-0.5,-0.5) grid (9.5, 9.5);
				\draw[step=3, black, cap=rect]  (-0.5,-0.5) grid (9.5, 9.5);
				\draw(4.5, -2) node{\footnotesize $(i,j) = (3,3)$};
			\end{scope}
		\end{tikzpicture}
		\caption{Extensions used in \eqref{eq:recA3n} for the recursion of $A_{3n}$. The gray region represent an element in $P_{i,j}(T,(3n)\times(3n))$, and the blue one the region it is extend with.}
     \label{fig:recA3n}
	\end{center}
\end{figure}

\begin{equation}
\label{eq:recB3n2}
\renewcommand{\arraystretch}{1.5}
\text{\small$\begin{array}{c@{\ }c@{\ }c@{\ }l}
	B_{3n-2} 
	&=& \multicolumn{2}{l}{ \displaystyle{\sum_{i=1}^3 \sum_{j=1}^3} |P_{i,j}(T,(3n-2)\times (3n-1))| }\\
	&=& & |P_{1,1}(T,(3n)\times (3n))| + |P_{1,1}(T,(3n)\times (3n))| \\ 
	& &+& |P_{1,1}(T,(3n)\times (3n+3))| + |P_{1,1}(T,(3n)\times (3n))| \\ 
	& &+& |P_{1,1}(T,(3n)\times (3n))| + |P_{1,1}(T,(3n)\times (3n+3))| \\
	& &+& |P_{1,1}(T,(3n)\times (3n))| + |P_{1,1}(T,(3n)\times (3n))| \\
	& &+& |P_{1,1}(T,(3n)\times (3n+3))| \\
	&=& & A_{n} + A_{n} + B_{n}  \\
	& &+& A_{n} + A_{n} + B_{n}  \\
	& &+& A_{n} + A_{n} + B_{n}, 
\end{array}$}
\end{equation}
see also Figure~\ref{fig:recB3n2}.

\begin{figure}[ht]
	\begin{center}
		\begin{tikzpicture}[scale = 0.25]
			\begin{scope}[xshift = 0cm, yshift = 28cm]			
				\draw[color=black, fill=gray, fill opacity=0.3] (0, 5) rectangle ++ (5, 4);
				\draw[color=black, fill=cyan, even odd rule] (0, 5) rectangle ++ (5, 4) (0, 3) rectangle ++ (6, 6);
				\draw[step=1, black, dotted]    (-0.5,-0.5) grid (9.5, 9.5);
				\draw[step=3, black, cap=rect]  (-0.5,-0.5) grid (9.5, 9.5);
				\draw(4.5, -2) node{\footnotesize $(i,j) = (1,1)$};
			\end{scope}
			\begin{scope}[xshift = 14cm, yshift = 28cm]
				\draw[color=black, fill=gray, fill opacity=0.3] (1, 5) rectangle ++ (5, 4);
				\draw[color=black, fill=cyan, even odd rule] (1, 5) rectangle ++ (5, 4) (0, 3) rectangle ++ (6, 6);
				\draw[step=1, black, dotted]    (-0.5,-0.5) grid (9.5, 9.5);
				\draw[step=3, black, cap=rect]  (-0.5,-0.5) grid (9.5, 9.5);
				\draw(4.5, -2) node{\footnotesize $(i,j) = (1,2)$};
			\end{scope}
			\begin{scope}[xshift = 28cm, yshift = 28cm]
				\draw[color=black, fill=gray, fill opacity=0.3] (2, 5) rectangle ++ (5, 4);
				\draw[color=black, fill=cyan, even odd rule] (2, 5) rectangle ++ (5, 4) (0, 3) rectangle ++ (9, 6);
				\draw[step=1, black, dotted]    (-0.5,-0.5) grid (9.5, 9.5);
				\draw[step=3, black, cap=rect]  (-0.5,-0.5) grid (9.5, 9.5);
				\draw(4.5, -2) node{\footnotesize $(i,j) = (1,3)$};
			\end{scope}
			\begin{scope}[xshift = 0cm, yshift = 14cm]			
				\draw[color=black, fill=gray, fill opacity=0.3] (0, 4) rectangle ++ (5, 4);
				\draw[color=black, fill=cyan, even odd rule] (0, 4) rectangle ++ (5, 4) (0, 3) rectangle ++ (6, 6);
				\draw[step=1, black, dotted]    (-0.5,-0.5) grid (9.5, 9.5);
				\draw[step=3, black, cap=rect]  (-0.5,-0.5) grid (9.5, 9.5);
				\draw(4.5, -2) node{\footnotesize $(i,j) = (2,1)$};
			\end{scope}
			\begin{scope}[xshift = 14cm, yshift = 14cm]
				\draw[color=black, fill=gray, fill opacity=0.3] (1, 4) rectangle ++ (5, 4);
				\draw[color=black, fill=cyan, even odd rule] (1, 4) rectangle ++ (5, 4) (0, 3) rectangle ++ (6, 6);
				\draw[step=1, black, dotted]    (-0.5,-0.5) grid (9.5, 9.5);
				\draw[step=3, black, cap=rect]  (-0.5,-0.5) grid (9.5, 9.5);
				\draw(4.5, -2) node{\footnotesize $(i,j) = (2,2)$};
			\end{scope}
			\begin{scope}[xshift = 28cm, yshift = 14cm]
				\draw[color=black, fill=gray, fill opacity=0.3] (2, 4) rectangle ++ (5, 4);
				\draw[color=black, fill=cyan, even odd rule] (2, 4) rectangle ++ (5, 4) (0, 3) rectangle ++ (9, 6);
				\draw[step=1, black, dotted]    (-0.5,-0.5) grid (9.5, 9.5);
				\draw[step=3, black, cap=rect]  (-0.5,-0.5) grid (9.5, 9.5);
				\draw(4.5, -2) node{\footnotesize $(i,j) = (2,3)$};
			\end{scope}
			\begin{scope}[xshift = 0cm, yshift = 0cm]			
				\draw[color=black, fill=gray, fill opacity=0.3] (0, 3) rectangle ++ (5, 4);
				\draw[color=black, fill=cyan, even odd rule] (0, 3) rectangle ++ (5, 4) (0, 3) rectangle ++ (6, 6);
				\draw[step=1, black, dotted]    (-0.5,-0.5) grid (9.5, 9.5);
				\draw[step=3, black, cap=rect]  (-0.5,-0.5) grid (9.5, 9.5);
				\draw(4.5, -2) node{\footnotesize $(i,j) = (3,1)$};
			\end{scope}
			\begin{scope}[xshift = 14cm, yshift = 0cm]
				\draw[color=black, fill=gray, fill opacity=0.3] (1, 3) rectangle ++ (5, 4);
				\draw[color=black, fill=cyan, even odd rule] (1, 3) rectangle ++ (5, 4) (0, 3) rectangle ++ (6, 6);
				\draw[step=1, black, dotted]    (-0.5,-0.5) grid (9.5, 9.5);
				\draw[step=3, black, cap=rect]  (-0.5,-0.5) grid (9.5, 9.5);
				\draw(4.5, -2) node{\footnotesize $(i,j) = (3,2)$};
			\end{scope}
			\begin{scope}[xshift = 28cm, yshift = 0cm]
				\draw[color=black, fill=gray, fill opacity=0.3] (2, 3) rectangle ++ (5, 4);
				\draw[color=black, fill=cyan, even odd rule] (2, 3) rectangle ++ (5, 4) (0, 3) rectangle ++ (9, 6);
				\draw[step=1, black, dotted]    (-0.5,-0.5) grid (9.5, 9.5);
				\draw[step=3, black, cap=rect]  (-0.5,-0.5) grid (9.5, 9.5);
				\draw(4.5, -2) node{\footnotesize $(i,j) = (3,3)$};
			\end{scope}
		\end{tikzpicture}
		\caption{Extensions used in \eqref{eq:recB3n2} for the recursion of $B_{3n-2}$. The gray region represent an element in $P_{i,j}(T,(3n-2)\times(3n-1))$, and the blue one the region it is extend with.}
     \label{fig:recB3n2}
	\end{center}
\end{figure}

\begin{equation}
\label{eq:recB3n1}
\renewcommand{\arraystretch}{1.5}
\text{\small$\begin{array}{c@{\ }c@{\ }c@{\ }l}
	B_{3n-1} 
	&=& \multicolumn{2}{l}{ \displaystyle{\sum_{i=1}^3 \sum_{j=1}^3} |P_{i,j}(T,(3n-1)\times (3n))| }\\
	&=& & |P_{1,1}(T,(3n)\times (3n))| + |P_{1,1}(T,(3n)\times (3n+3))| \\ 
	& &+& |P_{1,1}(T,(3n)\times (3n+3))| + |P_{1,1}(T,(3n)\times (3n))| \\ 
	& &+& |P_{1,1}(T,(3n)\times (3n+3))| + |P_{1,1}(T,(3n)\times (3n+3))| \\
	& &+& |P_{1,1}(T,(3n+3)\times (3n))| + |P_{1,1}(T,(3n+3)\times (3n+3))| \\
	& &+& |P_{1,1}(T,(3n+3)\times (3n+3))| \\
	&=& & A_{n} + B_{n} + B_{n}  \\
	& &+& A_{n} + B_{n} + B_{n}  \\
	& &+& C_{n} + A_{n+1} + A_{n+1}, 
\end{array}$}
\end{equation}
see also Figure~\ref{fig:recB3n1}.

\begin{figure}[ht]
	\begin{center}
		\begin{tikzpicture}[scale = 0.25]
			\begin{scope}[xshift = 0cm, yshift = 28cm]			
				\draw[color=black, fill=gray, fill opacity=0.3] (0, 4) rectangle ++ (6, 5);
				\draw[color=black, fill=cyan, even odd rule] (0, 4) rectangle ++ (6, 5) (0, 3) rectangle ++ (6, 6);
				\draw[step=1, black, dotted]    (-0.5,-0.5) grid (9.5, 9.5);
				\draw[step=3, black, cap=rect]  (-0.5,-0.5) grid (9.5, 9.5);
				\draw(4.5, -2) node{\footnotesize $(i,j) = (1,1)$};
			\end{scope}
			\begin{scope}[xshift = 14cm, yshift = 28cm]
				\draw[color=black, fill=gray, fill opacity=0.3] (1, 4) rectangle ++ (6, 5);
				\draw[color=black, fill=cyan, even odd rule] (1, 4) rectangle ++ (6, 5) (0, 3) rectangle ++ (9, 6);
				\draw[step=1, black, dotted]    (-0.5,-0.5) grid (9.5, 9.5);
				\draw[step=3, black, cap=rect]  (-0.5,-0.5) grid (9.5, 9.5);
				\draw(4.5, -2) node{\footnotesize $(i,j) = (1,2)$};
			\end{scope}
			\begin{scope}[xshift = 28cm, yshift = 28cm]
				\draw[color=black, fill=gray, fill opacity=0.3] (2, 4) rectangle ++ (6, 5);
				\draw[color=black, fill=cyan, even odd rule] (2, 4) rectangle ++ (6, 5) (0, 3) rectangle ++ (9, 6);
				\draw[step=1, black, dotted]    (-0.5,-0.5) grid (9.5, 9.5);
				\draw[step=3, black, cap=rect]  (-0.5,-0.5) grid (9.5, 9.5);
				\draw(4.5, -2) node{\footnotesize $(i,j) = (1,3)$};
			\end{scope}
			\begin{scope}[xshift = 0cm, yshift = 14cm]			
				\draw[color=black, fill=gray, fill opacity=0.3] (0, 3) rectangle ++ (6, 5);
				\draw[color=black, fill=cyan, even odd rule] (0, 3) rectangle ++ (6, 5) (0, 3) rectangle ++ (6, 6);
				\draw[step=1, black, dotted]    (-0.5,-0.5) grid (9.5, 9.5);
				\draw[step=3, black, cap=rect]  (-0.5,-0.5) grid (9.5, 9.5);
				\draw(4.5, -2) node{\footnotesize $(i,j) = (2,1)$};
			\end{scope}
			\begin{scope}[xshift = 14cm, yshift = 14cm]
				\draw[color=black, fill=gray, fill opacity=0.3] (1, 3) rectangle ++ (6, 5);
				\draw[color=black, fill=cyan, even odd rule] (1, 3) rectangle ++ (6, 5) (0, 3) rectangle ++ (9, 6);
				\draw[step=1, black, dotted]    (-0.5,-0.5) grid (9.5, 9.5);
				\draw[step=3, black, cap=rect]  (-0.5,-0.5) grid (9.5, 9.5);
				\draw(4.5, -2) node{\footnotesize $(i,j) = (2,2)$};
			\end{scope}
			\begin{scope}[xshift = 28cm, yshift = 14cm]
				\draw[color=black, fill=gray, fill opacity=0.3] (2, 3) rectangle ++ (6, 5);
				\draw[color=black, fill=cyan, even odd rule] (2, 3) rectangle ++ (6, 5) (0, 3) rectangle ++ (9, 6);
				\draw[step=1, black, dotted]    (-0.5,-0.5) grid (9.5, 9.5);
				\draw[step=3, black, cap=rect]  (-0.5,-0.5) grid (9.5, 9.5);
				\draw(4.5, -2) node{\footnotesize $(i,j) = (2,3)$};
			\end{scope}
			\begin{scope}[xshift = 0cm, yshift = 0cm]			
				\draw[color=black, fill=gray, fill opacity=0.3] (0, 2) rectangle ++ (6, 5);
				\draw[color=black, fill=cyan, even odd rule] (0, 2) rectangle ++ (6, 5) (0, 0) rectangle ++ (6, 9);
				\draw[step=1, black, dotted]    (-0.5,-0.5) grid (9.5, 9.5);
				\draw[step=3, black, cap=rect]  (-0.5,-0.5) grid (9.5, 9.5);
				\draw(4.5, -2) node{\footnotesize $(i,j) = (3,1)$};
			\end{scope}
			\begin{scope}[xshift = 14cm, yshift = 0cm]
				\draw[color=black, fill=gray, fill opacity=0.3] (1, 2) rectangle ++ (6, 5);
				\draw[color=black, fill=cyan, even odd rule] (1, 2) rectangle ++ (6, 5) (0, 0) rectangle ++ (9, 9);
				\draw[step=1, black, dotted]    (-0.5,-0.5) grid (9.5, 9.5);
				\draw[step=3, black, cap=rect]  (-0.5,-0.5) grid (9.5, 9.5);
				\draw(4.5, -2) node{\footnotesize $(i,j) = (3,2)$};
			\end{scope}
			\begin{scope}[xshift = 28cm, yshift = 0cm]
				\draw[color=black, fill=gray, fill opacity=0.3] (2, 2) rectangle ++ (6, 5);
				\draw[color=black, fill=cyan, even odd rule] (2, 2) rectangle ++ (6, 5) (0, 0) rectangle ++ (9, 9);
				\draw[step=1, black, dotted]    (-0.5,-0.5) grid (9.5, 9.5);
				\draw[step=3, black, cap=rect]  (-0.5,-0.5) grid (9.5, 9.5);
				\draw(4.5, -2) node{\footnotesize $(i,j) = (3,3)$};
			\end{scope}
		\end{tikzpicture}
		\caption{Extensions used in \eqref{eq:recB3n1} for the recursion of $B_{3n-1}$. The gray region represent an element in $P_{i,j}(T,(3n-1)\times(3n))$, and the blue one the region it is extend with.}
     \label{fig:recB3n1}
	\end{center}
\end{figure}

\begin{equation}
\label{eq:recB3n}
\renewcommand{\arraystretch}{1.5}
\text{\small$\begin{array}{c@{\ }c@{\ }c@{\ }l}
	B_{3n} 
	&=& \multicolumn{2}{l}{ \displaystyle{\sum_{i=1}^3 \sum_{j=1}^3} |P_{i,j}(T,(3n)\times (3n+1))| }\\
	&=& & |P_{1,1}(T,(3n)\times (3n+3))| + |P_{1,1}(T,(3n)\times (3n+3))| \\ 
	& &+& |P_{1,1}(T,(3n)\times (3n+3))| + |P_{1,1}(T,(3n+3)\times (3n+3))| \\ 
	& &+& |P_{1,1}(T,(3n+3)\times (3n+3))| + |P_{1,1}(T,(3n+3)\times (3n+3))| \\
	& &+& |P_{1,1}(T,(3n+3)\times (3n+3))| + |P_{1,1}(T,(3n+3)\times (3n+3))| \\
	& &+& |P_{1,1}(T,(3n+3)\times (3n+3))| \\
	&=& & B_{n}   + B_{n}   + B_{n}  \\
	& &+& A_{n+1} + A_{n+1} + A_{n+1}  \\
	& &+& A_{n+1} + A_{n+1} + A_{n+1}, 
\end{array}$}
\end{equation}
see also Figure~\ref{fig:recB3n}.

\begin{figure}[ht]
	\begin{center}
		\begin{tikzpicture}[scale = 0.25]
			\begin{scope}[xshift = 0cm, yshift = 28cm]			
				\draw[color=black, fill=gray, fill opacity=0.3] (0, 3) rectangle ++ (7, 6);
				\draw[color=black, fill=cyan, even odd rule] (0, 3) rectangle ++ (7, 6) (0, 3) rectangle ++ (9, 6);
				\draw[step=1, black, dotted]    (-0.5,-0.5) grid (9.5, 9.5);
				\draw[step=3, black, cap=rect]  (-0.5,-0.5) grid (9.5, 9.5);
				\draw(4.5, -2) node{\footnotesize $(i,j) = (1,1)$};
			\end{scope}
			\begin{scope}[xshift = 14cm, yshift = 28cm]
				\draw[color=black, fill=gray, fill opacity=0.3] (1, 3) rectangle ++ (7, 6);
				\draw[color=black, fill=cyan, even odd rule] (1, 3) rectangle ++ (7, 6) (0, 3) rectangle ++ (9, 6);
				\draw[step=1, black, dotted]    (-0.5,-0.5) grid (9.5, 9.5);
				\draw[step=3, black, cap=rect]  (-0.5,-0.5) grid (9.5, 9.5);
				\draw(4.5, -2) node{\footnotesize $(i,j) = (1,2)$};
			\end{scope}
			\begin{scope}[xshift = 28cm, yshift = 28cm]
				\draw[color=black, fill=gray, fill opacity=0.3] (2, 3) rectangle ++ (7, 6);
				\draw[color=black, fill=cyan, even odd rule] (2, 3) rectangle ++ (7, 6) (0, 3) rectangle ++ (9, 6);
				\draw[step=1, black, dotted]    (-0.5,-0.5) grid (9.5, 9.5);
				\draw[step=3, black, cap=rect]  (-0.5,-0.5) grid (9.5, 9.5);
				\draw(4.5, -2) node{\footnotesize $(i,j) = (1,3)$};
			\end{scope}
			\begin{scope}[xshift = 0cm, yshift = 14cm]			
				\draw[color=black, fill=gray, fill opacity=0.3] (0, 2) rectangle ++ (7, 6);
				\draw[color=black, fill=cyan, even odd rule] (0, 2) rectangle ++ (7, 6) (0, 0) rectangle ++ (9, 9);
				\draw[step=1, black, dotted]    (-0.5,-0.5) grid (9.5, 9.5);
				\draw[step=3, black, cap=rect]  (-0.5,-0.5) grid (9.5, 9.5);
				\draw(4.5, -2) node{\footnotesize $(i,j) = (2,1)$};
			\end{scope}
			\begin{scope}[xshift = 14cm, yshift = 14cm]
				\draw[color=black, fill=gray, fill opacity=0.3] (1, 2) rectangle ++ (7, 6);
				\draw[color=black, fill=cyan, even odd rule] (1, 2) rectangle ++ (7, 6) (0, 0) rectangle ++ (9, 9);
				\draw[step=1, black, dotted]    (-0.5,-0.5) grid (9.5, 9.5);
				\draw[step=3, black, cap=rect]  (-0.5,-0.5) grid (9.5, 9.5);
				\draw(4.5, -2) node{\footnotesize $(i,j) = (2,2)$};
			\end{scope}
			\begin{scope}[xshift = 28cm, yshift = 14cm]
				\draw[color=black, fill=gray, fill opacity=0.3] (2, 2) rectangle ++ (7, 6);
				\draw[color=black, fill=cyan, even odd rule] (2, 2) rectangle ++ (7, 6) (0, 0) rectangle ++ (9, 9);
				\draw[step=1, black, dotted]    (-0.5,-0.5) grid (9.5, 9.5);
				\draw[step=3, black, cap=rect]  (-0.5,-0.5) grid (9.5, 9.5);
				\draw(4.5, -2) node{\footnotesize $(i,j) = (2,3)$};
			\end{scope}
			\begin{scope}[xshift = 0cm, yshift = 0cm]			
				\draw[color=black, fill=gray, fill opacity=0.3] (0, 1) rectangle ++ (7, 6);
				\draw[color=black, fill=cyan, even odd rule] (0, 1) rectangle ++ (7, 6) (0, 0) rectangle ++ (9, 9);
				\draw[step=1, black, dotted]    (-0.5,-0.5) grid (9.5, 9.5);
				\draw[step=3, black, cap=rect]  (-0.5,-0.5) grid (9.5, 9.5);
				\draw(4.5, -2) node{\footnotesize $(i,j) = (3,1)$};
			\end{scope}
			\begin{scope}[xshift = 14cm, yshift = 0cm]
				\draw[color=black, fill=gray, fill opacity=0.3] (1, 1) rectangle ++ (7, 6);
				\draw[color=black, fill=cyan, even odd rule] (1, 1) rectangle ++ (7, 6) (0, 0) rectangle ++ (9, 9);
				\draw[step=1, black, dotted]    (-0.5,-0.5) grid (9.5, 9.5);
				\draw[step=3, black, cap=rect]  (-0.5,-0.5) grid (9.5, 9.5);
				\draw(4.5, -2) node{\footnotesize $(i,j) = (3,2)$};
			\end{scope}
			\begin{scope}[xshift = 28cm, yshift = 0cm]
				\draw[color=black, fill=gray, fill opacity=0.3] (2, 1) rectangle ++ (7, 6);
				\draw[color=black, fill=cyan, even odd rule] (2, 1) rectangle ++ (7, 6) (0, 0) rectangle ++ (9, 9);
				\draw[step=1, black, dotted]    (-0.5,-0.5) grid (9.5, 9.5);
				\draw[step=3, black, cap=rect]  (-0.5,-0.5) grid (9.5, 9.5);
				\draw(4.5, -2) node{\footnotesize $(i,j) = (3,3)$};
			\end{scope}
		\end{tikzpicture}
		\caption{Extensions used in \eqref{eq:recB3n} for the recursion of $B_{3n}$. The gray region represent an element in $P_{i,j}(T,(3n)\times(3n+1))$, and the blue one the region it is extend with.}
     \label{fig:recB3n}
	\end{center}
\end{figure}

\begin{equation}
\label{eq:recC3n2}
\renewcommand{\arraystretch}{1.5}
\text{\small$\begin{array}{c@{\ }c@{\ }c@{\ }l}
	C_{3n-2} 
	&=& \multicolumn{2}{l}{ \displaystyle{\sum_{i=1}^3 \sum_{j=1}^3 } |P_{i,j}(T,(3n-1)\times (3n-2))| }\\
	&=& & |P_{1,1}(T,(3n)\times (3n))| + |P_{1,1}(T,(3n)\times (3n))| \\ 
	& &+& |P_{1,1}(T,(3n)\times (3n))| + |P_{1,1}(T,(3n)\times (3n))| \\ 
	& &+& |P_{1,1}(T,(3n)\times (3n))| + |P_{1,1}(T,(3n)\times (3n))| \\
	& &+& |P_{1,1}(T,(3n+3)\times (3n))| + |P_{1,1}(T,(3n+3)\times (3n))| \\
	& &+& |P_{1,1}(T,(3n+3)\times (3n))| \\
	&=& & A_{n} + A_{n} + A_{n}  \\
	& &+& A_{n} + A_{n} + A_{n}  \\
	& &+& C_{n} + C_{n} + C_{n}, 
\end{array}$}
\end{equation}
see also Figure~\ref{fig:recC3n2}.

\begin{figure}[ht]
	\begin{center}
		\begin{tikzpicture}[scale = 0.25]
			\begin{scope}[xshift = 0cm, yshift = 28cm]			
				\draw[color=black, fill=gray, fill opacity=0.3] (0, 4) rectangle ++ (4, 5);
				\draw[color=black, fill=cyan, even odd rule] (0, 4) rectangle ++ (4, 5) (0, 3) rectangle ++ (6, 6);
				\draw[step=1, black, dotted]    (-0.5,-0.5) grid (9.5, 9.5);
				\draw[step=3, black, cap=rect]  (-0.5,-0.5) grid (9.5, 9.5);
				\draw(4.5, -2) node{\footnotesize $(i,j) = (1,1)$};
			\end{scope}
			\begin{scope}[xshift = 14cm, yshift = 28cm]
				\draw[color=black, fill=gray, fill opacity=0.3] (1, 4) rectangle ++ (4, 5);
				\draw[color=black, fill=cyan, even odd rule] (1, 4) rectangle ++ (4, 5) (0, 3) rectangle ++ (6, 6);
				\draw[step=1, black, dotted]    (-0.5,-0.5) grid (9.5, 9.5);
				\draw[step=3, black, cap=rect]  (-0.5,-0.5) grid (9.5, 9.5);
				\draw(4.5, -2) node{\footnotesize $(i,j) = (1,2)$};
			\end{scope}
			\begin{scope}[xshift = 28cm, yshift = 28cm]
				\draw[color=black, fill=gray, fill opacity=0.3] (2, 4) rectangle ++ (4, 5);
				\draw[color=black, fill=cyan, even odd rule] (2, 4) rectangle ++ (4, 5) (0, 3) rectangle ++ (6, 6);
				\draw[step=1, black, dotted]    (-0.5,-0.5) grid (9.5, 9.5);
				\draw[step=3, black, cap=rect]  (-0.5,-0.5) grid (9.5, 9.5);
				\draw(4.5, -2) node{\footnotesize $(i,j) = (1,3)$};
			\end{scope}
			\begin{scope}[xshift = 0cm, yshift = 14cm]			
				\draw[color=black, fill=gray, fill opacity=0.3] (0, 3) rectangle ++ (4, 5);
				\draw[color=black, fill=cyan, even odd rule] (0, 3) rectangle ++ (4, 5) (0, 3) rectangle ++ (6, 6);
				\draw[step=1, black, dotted]    (-0.5,-0.5) grid (9.5, 9.5);
				\draw[step=3, black, cap=rect]  (-0.5,-0.5) grid (9.5, 9.5);
				\draw(4.5, -2) node{\footnotesize $(i,j) = (2,1)$};
			\end{scope}
			\begin{scope}[xshift = 14cm, yshift = 14cm]
				\draw[color=black, fill=gray, fill opacity=0.3] (1, 3) rectangle ++ (4, 5);
				\draw[color=black, fill=cyan, even odd rule] (1, 3) rectangle ++ (4, 5) (0, 3) rectangle ++ (6, 6);
				\draw[step=1, black, dotted]    (-0.5,-0.5) grid (9.5, 9.5);
				\draw[step=3, black, cap=rect]  (-0.5,-0.5) grid (9.5, 9.5);
				\draw(4.5, -2) node{\footnotesize $(i,j) = (2,2)$};
			\end{scope}
			\begin{scope}[xshift = 28cm, yshift = 14cm]
				\draw[color=black, fill=gray, fill opacity=0.3] (2, 3) rectangle ++ (4, 5);
				\draw[color=black, fill=cyan, even odd rule] (2, 3) rectangle ++ (4, 5) (0, 3) rectangle ++ (6, 6);
				\draw[step=1, black, dotted]    (-0.5,-0.5) grid (9.5, 9.5);
				\draw[step=3, black, cap=rect]  (-0.5,-0.5) grid (9.5, 9.5);
				\draw(4.5, -2) node{\footnotesize $(i,j) = (2,3)$};
			\end{scope}
			\begin{scope}[xshift = 0cm, yshift = 0cm]			
				\draw[color=black, fill=gray, fill opacity=0.3] (0, 2) rectangle ++ (4, 5);
				\draw[color=black, fill=cyan, even odd rule] (0, 2) rectangle ++ (4, 5) (0, 0) rectangle ++ (6, 9);
				\draw[step=1, black, dotted]    (-0.5,-0.5) grid (9.5, 9.5);
				\draw[step=3, black, cap=rect]  (-0.5,-0.5) grid (9.5, 9.5);
				\draw(4.5, -2) node{\footnotesize $(i,j) = (3,1)$};
			\end{scope}
			\begin{scope}[xshift = 14cm, yshift = 0cm]
				\draw[color=black, fill=gray, fill opacity=0.3] (1, 2) rectangle ++ (4, 5);
				\draw[color=black, fill=cyan, even odd rule] (1, 2) rectangle ++ (4, 5) (0, 0) rectangle ++ (6, 9);
				\draw[step=1, black, dotted]    (-0.5,-0.5) grid (9.5, 9.5);
				\draw[step=3, black, cap=rect]  (-0.5,-0.5) grid (9.5, 9.5);
				\draw(4.5, -2) node{\footnotesize $(i,j) = (3,2)$};
			\end{scope}
			\begin{scope}[xshift = 28cm, yshift = 0cm]
				\draw[color=black, fill=gray, fill opacity=0.3] (2, 2) rectangle ++ (4, 5);
				\draw[color=black, fill=cyan, even odd rule] (2, 2) rectangle ++ (4, 5) (0, 0) rectangle ++ (6, 9);
				\draw[step=1, black, dotted]    (-0.5,-0.5) grid (9.5, 9.5);
				\draw[step=3, black, cap=rect]  (-0.5,-0.5) grid (9.5, 9.5);
				\draw(4.5, -2) node{\footnotesize $(i,j) = (3,3)$};
			\end{scope}
		\end{tikzpicture}
		\caption{Extensions used in \eqref{eq:recC3n2} for the recursion of $C_{3n-2}$. The gray region represent an element in $P_{i,j}(T,(3n-1)\times(3n-2))$, and the blue one the region it is extend with.}
     \label{fig:recC3n2}
	\end{center}
\end{figure}

\begin{equation}
\label{eq:recC3n1}
\renewcommand{\arraystretch}{1.5}
\text{\small$\begin{array}{c@{\ }c@{\ }c@{\ }l}
	C_{3n-1} 
	&=& \multicolumn{2}{l}{ \displaystyle{\sum_{i=1}^3 \sum_{j=1}^3} |P_{i,j}(T,(3n)\times (3n-1))| }\\
	&=& & |P_{1,1}(T,(3n)   \times (3n))|   + |P_{1,1}(T,(3n)  \times (3n))|  \\ 
	& &+& |P_{1,1}(T,(3n)   \times (3n+3))| + |P_{1,1}(T,(3n+3)\times (3n))|   \\ 
	& &+& |P_{1,1}(T,(3n+3) \times (3n))|   + |P_{1,1}(T,(3n+3)\times (3n+3))| \\
	& &+& |P_{1,1}(T,(3n+3) \times (3n))|   + |P_{1,1}(T,(3n+3)\times (3n))| \\
	& &+& |P_{1,1}(T,(3n+3) \times (3n+3))| \\
	&=& & A_{n} + A_{n} + B_{n}  \\
	& &+& C_{n} + C_{n} + A_{n+1}  \\
	& &+& C_{n} + C_{n} + A_{n+1},
\end{array}$}
\end{equation}
see also Figure~\ref{fig:recC3n1}.

\begin{figure}[ht]
	\begin{center}
		\begin{tikzpicture}[scale = 0.25]
			\begin{scope}[xshift = 0cm, yshift = 28cm]			
				\draw[color=black, fill=gray, fill opacity=0.3] (0, 3) rectangle ++ (5, 6);
				\draw[color=black, fill=cyan, even odd rule] (0, 3) rectangle ++ (5, 6) (0, 3) rectangle ++ (6, 6);
				\draw[step=1, black, dotted]    (-0.5,-0.5) grid (9.5, 9.5);
				\draw[step=3, black, cap=rect]  (-0.5,-0.5) grid (9.5, 9.5);
				\draw(4.5, -2) node{\footnotesize $(i,j) = (1,1)$};
			\end{scope}
			\begin{scope}[xshift = 14cm, yshift = 28cm]
				\draw[color=black, fill=gray, fill opacity=0.3] (1, 3) rectangle ++ (5, 6);
				\draw[color=black, fill=cyan, even odd rule] (1, 3) rectangle ++ (5, 6) (0, 3) rectangle ++ (6, 6);
				\draw[step=1, black, dotted]    (-0.5,-0.5) grid (9.5, 9.5);
				\draw[step=3, black, cap=rect]  (-0.5,-0.5) grid (9.5, 9.5);
				\draw(4.5, -2) node{\footnotesize $(i,j) = (1,2)$};
			\end{scope}
			\begin{scope}[xshift = 28cm, yshift = 28cm]
				\draw[color=black, fill=gray, fill opacity=0.3] (2, 3) rectangle ++ (5, 6);
				\draw[color=black, fill=cyan, even odd rule] (2, 3) rectangle ++ (5, 6) (0, 3) rectangle ++ (9, 6);
				\draw[step=1, black, dotted]    (-0.5,-0.5) grid (9.5, 9.5);
				\draw[step=3, black, cap=rect]  (-0.5,-0.5) grid (9.5, 9.5);
				\draw(4.5, -2) node{\footnotesize $(i,j) = (1,3)$};
			\end{scope}
			\begin{scope}[xshift = 0cm, yshift = 14cm]			
				\draw[color=black, fill=gray, fill opacity=0.3] (0, 2) rectangle ++ (5, 6);
				\draw[color=black, fill=cyan, even odd rule] (0, 2) rectangle ++ (5, 6) (0, 0) rectangle ++ (6, 9);
				\draw[step=1, black, dotted]    (-0.5,-0.5) grid (9.5, 9.5);
				\draw[step=3, black, cap=rect]  (-0.5,-0.5) grid (9.5, 9.5);
				\draw(4.5, -2) node{\footnotesize $(i,j) = (2,1)$};
			\end{scope}
			\begin{scope}[xshift = 14cm, yshift = 14cm]
				\draw[color=black, fill=gray, fill opacity=0.3] (1, 2) rectangle ++ (5, 6);
				\draw[color=black, fill=cyan, even odd rule] (1, 2) rectangle ++ (5, 6) (0, 0) rectangle ++ (6, 9);
				\draw[step=1, black, dotted]    (-0.5,-0.5) grid (9.5, 9.5);
				\draw[step=3, black, cap=rect]  (-0.5,-0.5) grid (9.5, 9.5);
				\draw(4.5, -2) node{\footnotesize $(i,j) = (2,2)$};
			\end{scope}
			\begin{scope}[xshift = 28cm, yshift = 14cm]
				\draw[color=black, fill=gray, fill opacity=0.3] (2, 2) rectangle ++ (5, 6);
				\draw[color=black, fill=cyan, even odd rule] (2, 2) rectangle ++ (5, 6) (0, 0) rectangle ++ (9, 9);
				\draw[step=1, black, dotted]    (-0.5,-0.5) grid (9.5, 9.5);
				\draw[step=3, black, cap=rect]  (-0.5,-0.5) grid (9.5, 9.5);
				\draw(4.5, -2) node{\footnotesize $(i,j) = (2,3)$};
			\end{scope}
			\begin{scope}[xshift = 0cm, yshift = 0cm]			
				\draw[color=black, fill=gray, fill opacity=0.3] (0, 1) rectangle ++ (5, 6);
				\draw[color=black, fill=cyan, even odd rule] (0, 1) rectangle ++ (5, 6) (0, 0) rectangle ++ (6, 9);
				\draw[step=1, black, dotted]    (-0.5,-0.5) grid (9.5, 9.5);
				\draw[step=3, black, cap=rect]  (-0.5,-0.5) grid (9.5, 9.5);
				\draw(4.5, -2) node{\footnotesize $(i,j) = (3,1)$};
			\end{scope}
			\begin{scope}[xshift = 14cm, yshift = 0cm]
				\draw[color=black, fill=gray, fill opacity=0.3] (1, 1) rectangle ++ (5, 6);
				\draw[color=black, fill=cyan, even odd rule] (1, 1) rectangle ++ (5, 6) (0, 0) rectangle ++ (6, 9);
				\draw[step=1, black, dotted]    (-0.5,-0.5) grid (9.5, 9.5);
				\draw[step=3, black, cap=rect]  (-0.5,-0.5) grid (9.5, 9.5);
				\draw(4.5, -2) node{\footnotesize $(i,j) = (3,2)$};
			\end{scope}
			\begin{scope}[xshift = 28cm, yshift = 0cm]
				\draw[color=black, fill=gray, fill opacity=0.3] (2, 1) rectangle ++ (5, 6);
				\draw[color=black, fill=cyan, even odd rule] (2, 1) rectangle ++ (5, 6) (0, 0) rectangle ++ (9, 9);
				\draw[step=1, black, dotted]    (-0.5,-0.5) grid (9.5, 9.5);
				\draw[step=3, black, cap=rect]  (-0.5,-0.5) grid (9.5, 9.5);
				\draw(4.5, -2) node{\footnotesize $(i,j) = (3,3)$};
			\end{scope}
		\end{tikzpicture}
		\caption{Extensions used in \eqref{eq:recC3n1} for the recursion of $C_{3n-1}$. The gray region represent an element in $P_{i,j}(T,(3n)\times(3n-1))$, and the blue one the region it is extend with.}
     \label{fig:recC3n1}
	\end{center}
\end{figure}

\begin{equation}
\label{eq:recC3n}
\renewcommand{\arraystretch}{1.5}
\text{\small$\begin{array}{c@{\ }c@{\ }c@{\ }l}
	C_{3n} 
	&=& \multicolumn{2}{l}{ \displaystyle{\sum_{i=1}^3 \sum_{j=1}^3} |P_{i,j}(T,(3n+1)\times (3n))| }\\
	&=& & |P_{1,1}(T,(3n+3)\times (3n))|   + |P_{1,1}(T,(3n+3)\times (3n+3))| \\ 
	& &+& |P_{1,1}(T,(3n+3)\times (3n+3))| + |P_{1,1}(T,(3n+3)\times (3n))| \\ 
	& &+& |P_{1,1}(T,(3n+3)\times (3n+3))| + |P_{1,1}(T,(3n+3)\times (3n+3))| \\
	& &+& |P_{1,1}(T,(3n+3)\times (3n))|   + |P_{1,1}(T,(3n+3)\times (3n+3))| \\
	& &+& |P_{1,1}(T,(3n+3)\times (3n+3))| \\
	&=& & C_{n} + A_{n+1} + A_{n+1}  \\
	& &+& C_{n} + A_{n+1} + A_{n+1}  \\
	& &+& C_{n} + A_{n+1} + A_{n+1}, 
\end{array}$}
\end{equation}
see also Figure~\ref{fig:recC3n}.

\begin{figure}[ht]
	\begin{center}
		\begin{tikzpicture}[scale = 0.25]
			\begin{scope}[xshift = 0cm, yshift = 28cm]			
				\draw[color=black, fill=gray, fill opacity=0.3] (0, 2) rectangle ++ (6, 7);
				\draw[color=black, fill=cyan, even odd rule] (0, 2) rectangle ++ (6, 7) (0, 0) rectangle ++ (6, 9);
				\draw[step=1, black, dotted]    (-0.5,-0.5) grid (9.5, 9.5);
				\draw[step=3, black, cap=rect]  (-0.5,-0.5) grid (9.5, 9.5);
				\draw(4.5, -2) node{\footnotesize $(i,j) = (1,1)$};
			\end{scope}
			\begin{scope}[xshift = 14cm, yshift = 28cm]
				\draw[color=black, fill=gray, fill opacity=0.3] (1, 2) rectangle ++ (6, 7);
				\draw[color=black, fill=cyan, even odd rule] (1, 2) rectangle ++ (6, 7) (0, 0) rectangle ++ (9, 9);
				\draw[step=1, black, dotted]    (-0.5,-0.5) grid (9.5, 9.5);
				\draw[step=3, black, cap=rect]  (-0.5,-0.5) grid (9.5, 9.5);
				\draw(4.5, -2) node{\footnotesize $(i,j) = (1,2)$};
			\end{scope}
			\begin{scope}[xshift = 28cm, yshift = 28cm]
				\draw[color=black, fill=gray, fill opacity=0.3] (2, 2) rectangle ++ (6, 7);
				\draw[color=black, fill=cyan, even odd rule] (2, 2) rectangle ++ (6, 7) (0, 0) rectangle ++ (9, 9);
				\draw[step=1, black, dotted]    (-0.5,-0.5) grid (9.5, 9.5);
				\draw[step=3, black, cap=rect]  (-0.5,-0.5) grid (9.5, 9.5);
				\draw(4.5, -2) node{\footnotesize $(i,j) = (1,3)$};
			\end{scope}
			\begin{scope}[xshift = 0cm, yshift = 14cm]			
				\draw[color=black, fill=gray, fill opacity=0.3] (0, 1) rectangle ++ (6, 7);
				\draw[color=black, fill=cyan, even odd rule] (0, 1) rectangle ++ (6, 7) (0, 0) rectangle ++ (6, 9);
				\draw[step=1, black, dotted]    (-0.5,-0.5) grid (9.5, 9.5);
				\draw[step=3, black, cap=rect]  (-0.5,-0.5) grid (9.5, 9.5);
				\draw(4.5, -2) node{\footnotesize $(i,j) = (2,1)$};
			\end{scope}
			\begin{scope}[xshift = 14cm, yshift = 14cm]
				\draw[color=black, fill=gray, fill opacity=0.3] (1, 1) rectangle ++ (6, 7);
				\draw[color=black, fill=cyan, even odd rule] (1, 1) rectangle ++ (6, 7) (0, 0) rectangle ++ (9, 9);
				\draw[step=1, black, dotted]    (-0.5,-0.5) grid (9.5, 9.5);
				\draw[step=3, black, cap=rect]  (-0.5,-0.5) grid (9.5, 9.5);
				\draw(4.5, -2) node{\footnotesize $(i,j) = (2,2)$};
			\end{scope}
			\begin{scope}[xshift = 28cm, yshift = 14cm]
				\draw[color=black, fill=gray, fill opacity=0.3] (2, 1) rectangle ++ (6, 7);
				\draw[color=black, fill=cyan, even odd rule] (2, 1) rectangle ++ (6, 7) (0, 0) rectangle ++ (9, 9);
				\draw[step=1, black, dotted]    (-0.5,-0.5) grid (9.5, 9.5);
				\draw[step=3, black, cap=rect]  (-0.5,-0.5) grid (9.5, 9.5);
				\draw(4.5, -2) node{\footnotesize $(i,j) = (2,3)$};
			\end{scope}
			\begin{scope}[xshift = 0cm, yshift = 0cm]			
				\draw[color=black, fill=gray, fill opacity=0.3] (0, 0) rectangle ++ (6, 7);
				\draw[color=black, fill=cyan, even odd rule] (0, 0) rectangle ++ (6, 7) (0, 0) rectangle ++ (6, 9);
				\draw[step=1, black, dotted]    (-0.5,-0.5) grid (9.5, 9.5);
				\draw[step=3, black, cap=rect]  (-0.5,-0.5) grid (9.5, 9.5);
				\draw(4.5, -2) node{\footnotesize $(i,j) = (3,1)$};
			\end{scope}
			\begin{scope}[xshift = 14cm, yshift = 0cm]
				\draw[color=black, fill=gray, fill opacity=0.3] (1, 0) rectangle ++ (6, 7);
				\draw[color=black, fill=cyan, even odd rule] (1, 0) rectangle ++ (6, 7) (0, 0) rectangle ++ (9, 9);
				\draw[step=1, black, dotted]    (-0.5,-0.5) grid (9.5, 9.5);
				\draw[step=3, black, cap=rect]  (-0.5,-0.5) grid (9.5, 9.5);
				\draw(4.5, -2) node{\footnotesize $(i,j) = (3,2)$};
			\end{scope}
			\begin{scope}[xshift = 28cm, yshift = 0cm]
				\draw[color=black, fill=gray, fill opacity=0.3] (2, 0) rectangle ++ (6, 7);
				\draw[color=black, fill=cyan, even odd rule] (2, 0) rectangle ++ (6, 7) (0, 0) rectangle ++ (9, 9);
				\draw[step=1, black, dotted]    (-0.5,-0.5) grid (9.5, 9.5);
				\draw[step=3, black, cap=rect]  (-0.5,-0.5) grid (9.5, 9.5);
				\draw(4.5, -2) node{\footnotesize $(i,j) = (3,3)$};
			\end{scope}
		\end{tikzpicture}
		\caption{Extensions used in \eqref{eq:recC3n} for the recursion of $C_{3n}$. The gray region represent an element in $P_{i,j}(T,(3n+1)\times(3n))$, and the blue one the region it is extend with.}
     \label{fig:recC3n}
	\end{center}
\end{figure}

We have now gone through all the necessary cases for the recursions. The initial values for these recursion are obtained via a straight forward enumeration, see Table~\ref{table:initialvalues}. 

\begin{table}[ht]
\footnotesize
\centering
\begin{tabular}{c*{11}{r}}
\toprule
$n$ & 
\multicolumn{1}{p{7mm}}{\hfill 1} & 
\multicolumn{1}{p{7mm}}{\hfill 2} & 
\multicolumn{1}{p{7mm}}{\hfill 3} & 
\multicolumn{1}{p{7mm}}{\hfill 4} & 
\multicolumn{1}{p{7mm}}{\hfill 5} & 
\multicolumn{1}{p{7mm}}{\hfill 6} & 
\multicolumn{1}{p{7mm}}{\hfill 7} & 
\multicolumn{1}{p{7mm}}{\hfill 8} &
\multicolumn{1}{p{7mm}}{\hfill 9} & 
\multicolumn{1}{p{7mm}}{\hfill10} \\ 
\midrule
$A_n$      &    2 &   14 &   70 &  126 &  270 &  438 &  630 &  790 &  958 & 1134 \\
$B_n$      &    4 &   36 &   96 &  192 &  348 &  528 &  708 &  872 & 1044 & 1332 \\
$C_n$      &    4 &   36 &   96 &  192 &  348 &  528 &  708 &  872 & 1044 & 1332 \\
\bottomrule
\end{tabular}
\caption{Initial terms for $A$, $B$, and $C$. }.
\label{table:initialvalues}
\end{table}

\section{Proof of Main Theorem}
\label{sec:mainproof}

In the previous section we derived recursions for $A_n, B_n$, and $C_n$ and in Table~\ref{table:initialvalues} we presented their initial values. In this section we show how to solve this recursion system, and thereby prove Theorem~\ref{thm:main}.

\begin{lemma}
\label{lemma:BequalsC}
Let $n\geq1$. Then $B_n = C_n$.
\end{lemma}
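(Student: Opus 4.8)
The plan is to introduce the difference sequence $D_n := B_n - C_n$ and to prove $D_n = 0$ for all $n \ge 1$ by strong induction, exploiting the fact that the recursions for $B$ and $C$ carry identical $A$-contributions that cancel upon subtraction. The base of the induction is read off Table~\ref{table:initialvalues}, which gives $D_1 = D_2 = D_3 = 0$ (indeed $B_n = C_n$ for every tabulated $n$).

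The key step is to subtract the $C$-recursion from the matching $B$-recursion in each of the three residue classes modulo $3$. Comparing \eqref{eq:recB3n2} with \eqref{eq:recC3n2} gives $B_{3n-2} - C_{3n-2} = (6A_n + 3B_n) - (6A_n + 3C_n) = 3D_n$, and comparing \eqref{eq:recB3n} with \eqref{eq:recC3n} gives $B_{3n} - C_{3n} = 3D_n$ in the same way. The only case where the cancellation is not completely immediate is the middle one: \eqref{eq:recB3n1} and \eqref{eq:recC3n1} read $B_{3n-1} = 2A_n + 4B_n + C_n + 2A_{n+1}$ and $C_{3n-1} = 2A_n + B_n + 4C_n + 2A_{n+1}$, so that the $A$-terms again cancel and the remaining terms recombine as $(4B_n + C_n) - (B_n + 4C_n) = 3(B_n - C_n) = 3D_n$. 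Hence, for every $n \ge 2$,
\[
	D_{3n-2} = D_{3n-1} = D_{3n} = 3\,D_n.
\]

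With this relation in hand the induction closes quickly. For any $m \ge 4$ write $m \in \{3n-2,\, 3n-1,\, 3n\}$ with $n = \lceil m/3\rceil$; then $2 \le n < m$, and the displayed relation gives $D_m = 3D_n$, which vanishes by the induction hypothesis. Together with the base cases this yields $D_m = 0$ for every $m$, i.e. $B_n = C_n$.

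The only genuine point requiring care is bookkeeping rather than mathematics: one must check that the three recursion families, each valid for $n \ge 2$, jointly cover every index $m \ge 4$, and that the reduction index $\lceil m/3\rceil$ is both strictly smaller than $m$ and at least $2$, so that the strong induction is well-founded and never appeals to an untabulated value. I expect this indexing check, together with confirming the coefficient pattern in the middle case \eqref{eq:recC3n1}, to be the main (if modest) obstacle; everything else follows directly from the recursions already established.
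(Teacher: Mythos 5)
Your proposal is correct and follows essentially the same route as the paper's own proof: both subtract the matching $B$- and $C$-recursions in the three residue classes modulo $3$, observe that the $A$-terms cancel leaving $B_{3n-2}-C_{3n-2}=B_{3n-1}-C_{3n-1}=B_{3n}-C_{3n}=3(B_n-C_n)$, and close by induction with base values from Table~\ref{table:initialvalues}. Your reformulation via $D_n := B_n - C_n$ with strong induction on $m$ (rather than the paper's induction on the block index $k$) is a cosmetic difference only, and your indexing check ($n=\lceil m/3\rceil$ satisfying $2\le n<m$ for $m\ge 4$) is sound.
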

\begin{proof}
Let us consider the following three cases,
\begin{equation}
\label{eq:BequalsC}
\renewcommand{\arraystretch}{1.3}
\left\{
	\begin{array}{r@{\ }l}
	B_{3k-2} &= C_{3k-2},\\
	B_{3k-1} &= C_{3k-1}, \\
	B_{3k}   &= C_{3k}. 
\end{array}
\right.
\end{equation}
We prove the equalities in \eqref{eq:BequalsC} by induction on $k$. The basis cases, $k=1,2$ are directly seen in Table~\ref{table:initialvalues}.
Assume for induction that the equalities in \eqref{eq:BequalsC} holds for $k < p$. Then the recursions for $B$ and $C$, and the induction assumption give 
\begin{align*}
	B_{3p-2} - C_{3p-2} &= 6A_{p} + 3B_{p} - 6A_{p} - 3C_{p}  = 3(B_{p} - C_{p}) =  0.
\end{align*}
Similar, for the second equality we have 
\begin{align*}
	B_{3p-1} - C_{3p-1} 
	&= 2A_{p} + 4B_{p} +  C_{p} + 2A_{p+1} \\
	&\quad   - 2A_{p} -  B_{p} - 4C_{p} - 2A_{p+1}  \\
	&= 3(B_{p} - C_{p}) \\
	&=  0.
\end{align*}
And in the same way, 
\begin{align*}
	B_{3p} - C_{3p} &= 3B_{p} + 6A_{p+1} - 3C_{p} - 6A_{p+1} = 3(B_{p} - C_{p}) =  0,
\end{align*}
which completes the induction. 
\end{proof}

From the recursion \eqref{eq:recA3n1}, \eqref{eq:recB3n}, \eqref{eq:recA3n2}, and combined with Lemma~\ref{lemma:BequalsC} we obtain for $n\geq 2$ 
\begin{align*}
	A_{9n-1}  
	& = 4A_{3n} + 4B_{3n} + A_{3n+1} + 3(A_{3n} - A_{n} - 4B_{n} - 4A_{n+1}) \\
	& = 7A_{3n} + 4(3B_{n} +6A_{n+1}) + A_{3n+1} - 3A_{n} - 12B_{n} - 12A_{n+1} \\
	& = 7A_{3n} + 12A_{n+1} + A_{3n+1} - 3A_{n}  \\
	& = 7A_{3n} + \frac{7}{3} A_{3n+1}  - \frac{1}{3}A_{3n-2}. 
\end{align*}
Continuing in the same way we obtain
\begin{equation}
\label{eq:rec}
\renewcommand{\arraystretch}{1.5}
	\left\{
	\begin{array}{l@{\ }c*{4}{@{\ }c@{\ }c@{}l}}
		A_{9n-8} &=& &           9 & A_{3n-2}, \\
		A_{9n-7} &=& &\frac{19}{3} & A_{3n-2} &+&   & A_{3n-1} &+&           3 & A_{3n}   &-& \frac{4}{3} & A_{3n+1}, \\
		A_{9n-6} &=& &\frac{10}{3} & A_{3n-2} &+& 4 & A_{3n-1} &+&           3 & A_{3n}   &-& \frac{4}{3} & A_{3n+1}, \\
		A_{9n-5} &=& &           9 & A_{3n-1}, \\
		A_{9n-4} &=& & \frac{1}{3} & A_{3n-2} &+& 4 & A_{3n-1} &+&           6 & A_{3n}   &-& \frac{4}{3} & A_{3n+1}, \\
		A_{9n-3} &=& & \frac{1}{3} & A_{3n-2} &+&   & A_{3n-1} &+&           9 & A_{3n}   &-& \frac{4}{3} & A_{3n+1}, \\
		A_{9n-2} &=& &           9 & A_{3n},  \\
		A_{9n-1} &=&-& \frac{1}{3} & A_{3n-2} &+& 7 & A_{3n}   &+&  \frac{7}{3} & A_{3n+1},   \\
		A_{9n}   &=&-& \frac{1}{3} & A_{3n-2} &+& 4 & A_{3n}   &+& \frac{16}{3} & A_{3n+1},  
	\end{array}
\right.
\end{equation}
for $n\geq 2$. The above recursions can be simplified a little.
\begin{lemma}
The number of square patterns in the squiral tiling $T$ fulfil the recursions 
\begin{equation}
\label{eq:simprec}
	\renewcommand{\arraystretch}{1.5}
	\left\{
	\begin{array}{l@{\ }*{3}{@{\ }c@{\ }r@{}l}}
		A_{3n-2} &=&  9 & A_{n}, \\
		A_{9n-7} &=&  5 & A_{3n+1} &-& 16 & A_{3n}  &+& 20 & A_{3n-1}, \\
		A_{9n-4} &=&  - & A_{3n+1} &+&  5 & A_{3n}  &+& 5 & A_{3n-1},  \\
		A_{9n-1} &=&  2 & A_{3n+1} &+&  8 & A_{3n}  &-&   & A_{3n-1},  \\
		A_{3n}   &=&    & A_{3n-1} &+&  3 & A_{n+1} &-& 3 & A_{n},  
	\end{array}
	\right.
\end{equation}
with the initial values of $A_i$, for $i = 1,\ldots,8$, given in Table \ref{table:initialvalues}.
\end{lemma}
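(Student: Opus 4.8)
The plan is to derive all five identities in \eqref{eq:simprec} from the already-established system \eqref{eq:rec} together with two structural relations that come directly from the block expansions. First note that the identity $A_{3n-2} = 9A_n$ is nothing but \eqref{eq:recA3n2}, so it requires no further work. For the last identity, $A_{3n} = A_{3n-1} + 3A_{n+1} - 3A_n$, I would subtract the expansion \eqref{eq:recA3n1} of $A_{3n-1}$ from the expansion \eqref{eq:recA3n} of $A_{3n}$. Both right-hand sides are written in terms of $A_n$, $B_n$, $C_n$, and $A_{n+1}$, and upon subtraction the $B_n$ and $C_n$ contributions cancel, leaving precisely $A_{3n} - A_{3n-1} = 3A_{n+1} - 3A_n$.

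The three remaining identities, for $A_{9n-7}$, $A_{9n-4}$, and $A_{9n-1}$, are the substance of the simplification, and the key step is to eliminate the $A_{3n-2}$ term occurring in the corresponding lines of \eqref{eq:rec}. To do this I would combine the two relations just obtained: inserting $A_n = \tfrac{1}{9}A_{3n-2}$ and $A_{n+1} = \tfrac{1}{9}A_{3n+1}$ (two instances of $A_{3n-2} = 9A_n$) into $A_{3n} = A_{3n-1} + 3A_{n+1} - 3A_n$ and rearranging yields
\[
	A_{3n-2} = 3A_{3n-1} - 3A_{3n} + A_{3n+1}.
\]
This writes $A_{3n-2}$ purely in terms of $A_{3n-1}$, $A_{3n}$, and $A_{3n+1}$, which is exactly what is needed to remove $A_{3n-2}$ from the right-hand sides of the three target lines of \eqref{eq:rec}.

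Substituting this expression into each of those lines and collecting coefficients then completes the proof. For example, in $A_{9n-7} = \tfrac{19}{3}A_{3n-2} + A_{3n-1} + 3A_{3n} - \tfrac{4}{3}A_{3n+1}$, replacing $A_{3n-2}$ gives the combination $20A_{3n-1} - 16A_{3n} + 5A_{3n+1}$, matching \eqref{eq:simprec}; the lines for $A_{9n-4}$ and $A_{9n-1}$ are treated in exactly the same manner, each reducing to the asserted integer-coefficient form. Everything is valid for $n \geq 2$, the range in which \eqref{eq:rec} holds, while the initial values $A_1, \dots, A_8$ are read off from Table~\ref{table:initialvalues}.

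The main obstacle here is not conceptual but purely arithmetical: each identity amounts to a single linear elimination of $A_{3n-2}$, so the only care required is in tracking the fractional coefficients $\tfrac{19}{3}$, $\tfrac{4}{3}$, $\tfrac{1}{3}$, $\tfrac{7}{3}$, and $\tfrac{16}{3}$ through the substitution and verifying that they combine into the integers displayed in \eqref{eq:simprec}. I expect the verification of $A_{9n-7}$ to be the most error-prone, since it carries the largest coefficient, but no step is genuinely difficult once the relation for $A_{3n-2}$ above is in hand.
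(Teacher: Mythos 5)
Your proposal is correct and takes essentially the same route as the paper: both establish $A_{3n}=A_{3n-1}+3A_{n+1}-3A_n$ and then eliminate $A_{3n-2}$ from the three middle lines of \eqref{eq:rec} by the linear substitution $A_{3n-2}=3A_{3n-1}-3A_{3n}+A_{3n+1}$ (which I checked reproduces the coefficients $5,-16,20$; $-1,5,5$; $2,8,-1$ exactly). The only cosmetic differences are that you derive the final identity by subtracting \eqref{eq:recA3n1} from \eqref{eq:recA3n} directly, whereas the paper reads it off from the differences $A_{9n-i}-A_{9n-i-1}$, $i\in\{0,3,6\}$, within \eqref{eq:rec}, and the paper phrases the elimination as adding suitable multiples of the zero quantity $A_{3n}-A_{3n-1}-3(A_{n+1}-A_n)$ rather than as an explicit substitution.
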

\begin{proof}
The first case in \eqref{eq:simprec} is direct from \eqref{eq:rec}. 
By looking at the three differences; $A_{9n-i}-A_{9n-i-1}$, where $i=\{0,3,6\}$, we may conclude  
\[	
	A_{3n} =  A_{3n-1}  + 3 (A_{n+1} - A_{n}),
\]
which is the final case in \eqref{eq:simprec}. The last three cases are from \eqref{eq:rec} by adding the just above derived equality as follows 
\begin{align*}
	A_{9n-1} &= -\frac{1}{3} A_{3n-2} + 7A_{3n} + \frac{7}{3}A_{3n+1} + \big(A_{3n} - A_{3n-1}  - 3 (A_{n+1} - A_{n})\big)\\
			 &= -\frac{1}{3} A_{3n-2} + 7A_{3n} + \frac{7}{3}A_{3n+1} +  A_{3n} - A_{3n-1}  - \frac{1}{3} A_{3n+1} + \frac{1}{3}A_{3n-2}   \\
			 &=  7A_{3n} + \frac{7}{3}A_{3n+1} +  A_{3n} - A_{3n-1}  - \frac{1}{3} A_{3n+1}   \\
			 &=  - A_{3n-1} + 8A_{3n} + 2A_{3n+1}.  
\end{align*}
The remaining cases follow by looking at $A_{9n-4} - \big(A_{3n} - A_{3n-1}  - 3 (A_{n+1} - A_{n})\big)$ and 
$A_{9n-7} - 19\big(A_{3n} - A_{3n-1}  - 3 (A_{n+1} - A_{n})\big)$.
\end{proof}

For $n\geq4$ define the two integer valued functions  
\[
    \alpha(n) := \big\lfloor \log_3 (n-2) \big\rfloor,
    \quad\textnormal{and}\quad
    \beta(n) := \left\lfloor \log_3 \frac{n-2}{2} \right\rfloor,
\]
where $\log_3$ denotes the logarithm in base 3 and the brackets $\lfloor \cdot \rfloor$ is the floor function; see also the statement of Theorem~\ref{thm:main} for $\alpha$ and $\beta$. 

To prove that \eqref{eq:main} fulfils the first recursion relation in \eqref{eq:simprec}, $(A_{3n-2} = 9A_n)$, let us denote $\alpha_0 := \alpha(3n-2)$ and $\alpha_1 := \alpha(n)$, and similarly for $\beta_0$ and $\beta_1$. Then we see in Table~\ref{table:alphabetavalues1} that $\alpha_0 = \alpha_1+1$ and $\beta_0 = \beta_1+1$. This leads to 
\[
\text{\footnotesize\renewcommand{\arraystretch}{1.5}%
$\begin{array}{c@{\ }c@{\ }c@{\ }c@{}l@{\ }c@{\ }l@{\ }c@{\ }l}
	A_{3n-2} &-& \multicolumn{7}{@{\ }l}{ 9 A_n}\\
	         &=& &        & \big(4+8\alpha_0 -8\beta_0\big)(3n-3)^2 &+&  \big(12\cdot3^{\alpha_0}+24\cdot3^{\beta_0}\big)(3n-3) &-& 18 \cdot 9^{\alpha_0} \\
	         & &-& 9 \Big(& \big(4+8\alpha_1 -8\beta_1\big)(n-1)^2  &+&  \big(12\cdot3^{\alpha_1}+24\cdot3^{\beta_1}\big) (n-1) &-& 18 \cdot 9^{\alpha_1}\Big) \\
			 &=& \multicolumn{7}{@{\ }l}{0.}
\end{array}$}
\]
\begin{table}[ht]
\centering
{\small
\renewcommand{\arraystretch}{1.1}
\begin{tabular}{l@{\hspace{7mm}}cccccc}
\toprule
      \multicolumn{1}{c}{$n$} 
    & \multicolumn{1}{p{15mm}}{\centering\arraybackslash$3^k+1$} 
    & \multicolumn{1}{p{15mm}}{\centering\arraybackslash$3^k+2$} 
    & \multicolumn{1}{p{15mm}}{\centering\arraybackslash$2\cdot3^k+1$} 
    & \multicolumn{1}{p{15mm}}{\centering\arraybackslash$2\cdot3^k+2$} 
\\
\midrule
$\alpha(3n-2)$  & $k$   & $k+1$  & $k+1$ & $k+1$ \\ 
$\beta( 3n-2)$  & $k$   & $k$    & $k$   & $k+1$ \\
\midrule
$\alpha(n)$     & $k-1$ & $k$    & $k$   & $k$   \\ 
$\beta( n)$     & $k-1$ & $k-1$  & $k-1$ & $k$   \\
\bottomrule
\end{tabular}
}
\caption{Values of $\alpha$ and $\beta$. }
\label{table:alphabetavalues1}
\end{table}

The second, third and fourth recursion of \eqref{eq:simprec} follow all the same scheme. Therefore let us only consider the second recursion. Similar to the case above we use here the short hand; $\alpha_0  := \alpha(9n-7)$, $\alpha_1  := \alpha(3n+1)$, $\alpha_2  := \alpha(3n)$, and $\alpha_3  := \alpha(3n-1)$. Analogous we use the $\beta_0, \ldots, \beta_3$. From Table~\ref{table:alphabetavalues2} we see the different values of the $\alpha$ and $\beta$ depending on $n$. Here we have to consider 
the two intervals for $n$, namely
\[	
	3^k +1 \leq n < 2 \cdot 3^k + 1, \quad \textnormal{and}\quad 2 \cdot 3^k + 1 \leq n <  3^{k+1} + 1.
\]
In both these cases we find 
\[
\text{\footnotesize\renewcommand{\arraystretch}{1.6}%
$\begin{array}{c@{\ }c@{\ }c@{\ }r@{}l@{\ }c@{\ }l@{\ }c@{\ }l}
	A_{9n-7}   &-& \multicolumn{7}{@{\,}l}{ 5A_{3n+1} +16 A_{3n} -20 A_{3n-1}}\\
	         &=& &         & \big(4+8\alpha_0 -8\beta_0\big)(9n-8)^2   &+&  \big(12\cdot3^{\alpha_0}+24\cdot3^{\beta_0}\big)(9n-8)  &-& 18 \cdot 9^{\alpha_0} \\
	         & &-&  5 \Big(& \big(4+8\alpha_1 -8\beta_1\big)(3n  )^2   &+&  \big(12\cdot3^{\alpha_1}+24\cdot3^{\beta_1}\big)(3n  )  &-& 18 \cdot 9^{\alpha_1}\Big) \\
	         & &+& 16 \Big(& \big(4+8\alpha_2 -8\beta_2\big)(3n-1)^2   &+&  \big(12\cdot3^{\alpha_2}+24\cdot3^{\beta_2}\big)(3n-1)  &-& 18 \cdot 9^{\alpha_2}\Big) \\
	         & &-& 20 \Big(& \big(4+8\alpha_3 -8\beta_3\big)(3n-2)^2   &+&  \big(12\cdot3^{\alpha_3}+24\cdot3^{\beta_3}\big)(3n-2)  &-& 18 \cdot 9^{\alpha_3}\Big) \\
			 &=& \multicolumn{7}{@{\ }l}{0.}
\end{array}$}
\]
The recursion for $A_{9n-4}$, and $A_{9n-1}$ are treated in the same way. 

\begin{table}[ht]
\centering
{\small\renewcommand{\arraystretch}{1.1}
\begin{tabular}{l@{\hspace{7mm}}cccccc}
\toprule
      \multicolumn{1}{c}{$n$} 
    & \multicolumn{1}{p{12mm}}{\centering\arraybackslash$3^k$}
    & \multicolumn{1}{p{12mm}}{\centering\arraybackslash$3^k+1$}
    & \multicolumn{1}{p{12mm}}{\centering\arraybackslash$3^k+2$}
    & \multicolumn{1}{p{12mm}}{\centering\arraybackslash$2\cdot3^k$} 
    & \multicolumn{1}{p{12mm}}{\centering\arraybackslash$2\cdot3^k+1$} 
    & \multicolumn{1}{p{12mm}}{\centering\arraybackslash$2\cdot3^k+2$} 
 \\
\midrule
$\alpha(9n-7)$  & $k+1$ & $k+2$ & $k+2$ & $k+2$ & $k+2$ & $k+2$ \\ 
$\beta( 9n-7)$  & $k+1$ & $k+1$ & $k+1$ & $k+1$ & $k+2$ & $k+2$ \\ 
\midrule
$\alpha(9n-4)$  & $k+1$ & $k+2$ & $k+2$ & $k+2$ & $k+2$ & $k+2$ \\ 
$\beta( 9n-4)$  & $k+1$ & $k+1$ & $k+1$ & $k+1$ & $k+2$ & $k+2$ \\ 
\midrule
$\alpha(9n-1)$  & $k+1$ & $k+2$ & $k+2$ & $k+2$ & $k+2$ & $k+2$ \\ 
$\beta( 9n-1)$  & $k+1$ & $k+1$ & $k+1$ & $k+1$ & $k+2$ & $k+2$ \\ 
\midrule
$\alpha(3n+1)$  & $k$   & $k+1$ & $k+1$ & $k+1$ & $k+1$ & $k+1$ \\ 
$\beta( 3n+1)$  & $k$   & $k$   & $k$   & $k$   & $k+1$ & $k+1$ \\ 
\midrule
$\alpha(3n)$    & $k$   & $k+1$ & $k+1$ & $k+1$ & $k+1$ & $k+1$ \\ 
$\beta( 3n)$    & $k$   & $k$   & $k$   & $k$   & $k+1$ & $k+1$ \\
\midrule
$\alpha(3n-1)$  & $k$   & $k+1$ & $k+1$ & $k+1$ & $k+1$ & $k+1$ \\ 
$\beta( 3n-1)$  & $k$   & $k$   & $k$   & $k$   & $k+1$ & $k+1$ \\
\bottomrule
\end{tabular}
}
\caption{Values of $\alpha$ and $\beta$. }
\label{table:alphabetavalues2}
\end{table}

For the final term in \eqref{eq:simprec} we proceed in the same way. From Table~\ref{table:alphabetavalues3} we see that we have to consider 4 different cases for $n$, namely; 
$n = 3^k +1$, $n = 2\cdot 3^k + 1$,
\[	
	3^k+2\leq n < 2 \cdot 3^k + 1, \quad \textnormal{and}\quad 2\cdot 3^k + 2  \leq n < 3^{k+1} + 1.
\]
As above, we find for all the cases $A_{3n} - A_{3n-1} - 3 A_{n+1} + 3 A_{n} = 0$. Therefore we may conclude that \eqref{eq:main} is a solution to \eqref{eq:simprec}, which completes the proof of Theorem~\ref{thm:main}. 

\begin{table}[ht]
\centering
{\small
\renewcommand{\arraystretch}{1.1}
\begin{tabular}{l@{\hspace{7mm}}cccccc}
\toprule
      \multicolumn{1}{c}{$n$} 
    & \multicolumn{1}{p{12mm}}{\centering\arraybackslash$3^k$} 
    & \multicolumn{1}{p{12mm}}{\centering\arraybackslash$3^k+1$} 
    & \multicolumn{1}{p{12mm}}{\centering\arraybackslash$3^k+2$} 
    & \multicolumn{1}{p{12mm}}{\centering\arraybackslash$2\cdot3^k$} 
    & \multicolumn{1}{p{12mm}}{\centering\arraybackslash$2\cdot3^k+1$} 
    & \multicolumn{1}{p{12mm}}{\centering\arraybackslash$2\cdot3^k+2$} 
\\
\midrule
$\alpha(3n)$    &$k$   & $k+1$ & $k+1$  & $k+1$  & $k+1$ & $k+1$ \\ 
$\beta( 3n)$    &$k$   & $k$   & $k$    & $k$    & $k+1$ & $k+1$ \\
\midrule
$\alpha(3n-1)$  &$k$   & $k+1$ & $k+1$  & $k+1$  & $k+1$ & $k+1$ \\ 
$\beta( 3n-1)$  &$k$   & $k$   & $k$    & $k$    & $k+1$ & $k+1$ \\
\midrule
$\alpha(n+1)$   &$k-1$ & $k$   & $k$    & $k$    & $k$   & $k$   \\ 
$\beta( n+1)$   &$k-1$ & $k-1$ & $k-1$  & $k-1$  & $k$   & $k$   \\
\midrule
$\alpha(n)$     &$k-1$ & $k-1$ & $k$    & $k$    & $k$   & $k$   \\ 
$\beta( n)$     &$k-1$ & $k-1$ & $k-1$  & $k-1$  & $k-1$ & $k$   \\
\bottomrule
\end{tabular}
}
\caption{Values of $\alpha$ and $\beta$.}
\label{table:alphabetavalues3}
\end{table}

Let us end with a short note of precaution when calculating the values of $A_n$ via \eqref{eq:main}. The standard implementation for numerical calculations using double precision (the IEEE 754), e.g.\ as used in the programming language JAVA, gives the approximated value 
\[
	5 = \frac{\ln 243 }{\ln 3}  \approx 4.999999999999999,  
\]
which leads to the wrong value of $\alpha(243+2)$.

\noindent\rule{10em}{0.5pt}

\texttt{\small johannilsson514@gmail.com}
\end{document}